\documentclass[11pt]{amsart}

\usepackage{a4wide,amsmath,amsfonts,amssymb,amsthm,mathrsfs,bbm}
\usepackage[hyperfootnotes=false]{hyperref}

\allowdisplaybreaks[2]

\numberwithin{equation}{section}

\newtheorem{theorem}{Theorem}[section]
\newtheorem{lemma}[theorem]{Lemma}

\newtheorem{remark}[theorem]{Remark}
\newtheorem{proposition}[theorem]{Proposition}
\newtheorem{definition}[theorem]{Definition}
\newtheorem{assumption}[theorem]{Assumption}

\renewcommand{\d}{\mathrm{d}}
\newcommand{\dd}{\,\mathrm{d}}
\newcommand{\R}{\mathbb{R}}
\newcommand{\bR}{\mathbb{R}}
\newcommand{\N}{\mathbb{N}}
\newcommand{\bN}{\mathbb{N}}
\newcommand{\bE}{\mathbb{E}}
\renewcommand{\P}{\mathbb{P}}
\renewcommand{\epsilon}{\varepsilon}
\newcommand{\bP}{\mathbb{P}}
\def\L{{\mathcal{L}}}

\title[Weak solutions to distribution-dependent SVEs]{Weak solutions to distribution-dependent stochastic Volterra equations}

\author[Bergerhausen]{Martin Bergerhausen}
\address{Martin Bergerhausen, University of Mannheim, Germany}
\email{martin.bergerhausen@uni-mannheim.de}

\author[Pr{\"o}mel]{David J. Pr{\"o}mel}
\address{David J. Pr{\"o}mel, University of Mannheim, Germany}
\email{proemel@uni-mannheim.de}

\date{\today}

\begin{document}
 
\begin{abstract}
  We prove the existence of weak solutions for distribution-dependent stochastic Volterra equations under linear growth and continuity conditions on the coefficients and mild regularity assumptions on the kernels, including singular kernels. To this end, we formulate an associated local martingale problem and establish its connection with weak solutions. Moreover, we derive continuity and integrability properties of the solutions.
\end{abstract}

\maketitle

\noindent \textbf{Key words:} local martingale problem, McKean--Vlasov equation, non-Lipschitz coefficients, properties of solutions, singular kernel, stochastic Volterra equation, weak existence.

\noindent \textbf{MSC 2020 Classification:} 60H20, 45D05.

% Classification
% --------------
%  60H20 - Stochastic analysis (Stochastic integral equations)
%  45D05 - Volterra integral equations (Volterra integral equations)

%\tableofcontents

\section{Introduction}

Distribution-dependent stochastic Volterra equations provide a natural framework that combines two important generalizations of classical stochastic differential equations: stochastic Volterra equations, which incorporate memory effects through kernels, and distribution-dependent (mean-field or McKean--Vlasov) stochastic differential equations, where the coefficients depend on the law of the solution. By bringing together these two features, distribution-dependent stochastic Volterra equations offer a flexible modeling framework capable of capturing both non-Markovian dynamics and distributional interactions.

Stochastic Volterra equations, first studied by Berger and Mizel \cite{Berger1980a,Berger1980b}, extend classical stochastic differential equations by allowing the dynamics to depend on the past through a kernel. While they are a classical object in probability theory with a wide range of applications, they have received renewed attention in recent years, notably due to their role in rough volatility models in mathematical finance; see, for instance, \cite{Jaisson2016,ElEuch2019}. In parallel, distribution-dependent stochastic differential equations, also known as mean-field or McKean--Vlasov stochastic differential equations, describe interacting particle systems in which the dynamics of each component depend on the distribution of the entire system. Their study dates back to the seminal works of Kac~\cite{Kac56}, McKean~\cite{McKean66} and Vlasov~\cite{Vlasov1968}, and they continue to form a very active area of research with numerous applications in fields such as physics, finance and data science; see, e.g., \cite{Carmona2018,Carmona2018b,Chaintron2022,Chaintron2022b} for comprehensive overviews.

While stochastic Volterra equations and distribution-dependent stochastic differential equations have been extensively studied individually, their combination constitutes a natural generalization from both a mathematical and modeling perspective, allowing one to incorporate memory effects and mean-field interactions within a single framework. This has motivated a growing body of research on distribution-dependent stochastic Volterra equations; see, e.g., \cite{Shi2013,Promel_MF,Jie2024,Liu2023,Kalinin2026}. However, this increased generality also introduces significant mathematical challenges, as the resulting dynamics are typically non-Markovian and need not be semimartingales, thereby limiting the applicability of classical tools from stochastic analysis such as It\^o's formula, Gronwall-type arguments, and Burkholder--Davis--Gundy inequalities.

The aim of this paper is to study the existence of weak solutions to the multi-dimensional distribution-dependent stochastic Volterra equation (distribution-dependent SVE)
\begin{equation}\label{eq:SVE intro}
  X_t = X_0 + \int_0^t K_b(s,t)\, b(s,X_s,\mathcal L(X_s))\dd s + \int_0^t K_\sigma(s,t)\, \sigma(s,X_s,\mathcal L(X_s))\dd B_s ,
\end{equation}
where the time-inhomogeneous drift and diffusion coefficients $b,\sigma$ depend on the state and the law of the solution, the kernels $K_b,K_\sigma$ are measurable functions, and $B=(B_t)_{t\in [0,T]}$ is multi-dimensional Brownian motion. Let us remark that we allow for kernels that may be singular and does not require a convolution structure.

As first contribution, we introduce local martingale problems associated to the distribution-dependent SVE~\eqref{eq:SVE intro}. The presented distribution-dependent Volterra local martingale problems naturally extend the classical martingale problems for (McKean--Vlasov) stochastic differential equations, cf. e.g. \cite{Stroock1979,Funaki1984}, as well as the martingale problems for stochastic Volterra quation \cite{AbiJaber2021,Promel2023_weak}. Under fairly mild integrability conditions on the kernels and a continuity assumption on the coefficients, we prove that the existence of weak solutions to distribution-dependent SVE~\eqref{eq:SVE intro} is equivalent to the solvability of the associated Volterra local martingale problems, see Proposition~\ref{prop:equivalence}.

As a second and main contribution, we establish the existence of weak solutions under linear growth and continuity assumptions on the coefficients, together with mild regularity and integrability conditions on the kernels; see Theorem~\ref{thm: weak solution to mean-field SVEs}. In contrast to the approximation techniques employed in \cite{AbiJaber2021,Promel2023_weak,AbiJaber2025} for classical stochastic Volterra equations, we introduce an Euler-type approximation scheme tailored to the distribution-dependent SVE~\eqref{eq:SVE intro}, which appears to be more suitable in view of the law dependence of the coefficients. While our approach follows the classical strategy of combining approximation methods with compactness arguments, the Volterra structure gives rise to substantial additional difficulties. We first derive uniform moment estimates for the approximating processes, which ensure tightness in the space of continuous paths. This is achieved via suitable increment estimates and an application of Kolmogorov's criterion. By Prokhorov's theorem, we obtain the existence of a convergent subsequence, and a Skorokhod representation argument yields almost sure convergence on a suitable probability space. Finally, we identify the limit by showing that it solves the Volterra local martingale problem introduced earlier. As a by-product, the moment estimates also yield continuity and integrability properties of the resulting solutions.

Let us conclude this introduction with a brief discussion of related results in the existing literature. For classical distribution-dependent stochastic differential equations, the martingale problem formulation and the existence of weak solutions are well studied and by now classical topics; see, e.g., \cite{Stroock1979,Funaki1984} and the references therein.

The existence of weak solutions to stochastic Volterra equations has been established through a variety of approaches extending classical SDE techniques to the non-Markovian setting. For convolution-type SVEs, weak existence and stability are obtained via martingale problem formulations and $L^p$-based weak convergence methods \cite{AbiJaber2021}, see also \cite{Mytnik2015}. Additional existence results for affine and polynomial Volterra frameworks under structural and invariance conditions are provided in \cite{AbiJaber2019,AbiJaber2024}. For general kernels, a Volterra martingale problem formulation and corresponding weak existence results for time-inhomogeneous coefficients are established in \cite{Promel2023}. These results have been further extended to singular and non-convolution kernels via approximation and stability methods \cite{AbiJaber2025}, while \cite{Hamaguchi2025} proves weak existence and uniqueness in law for SVE with completely monotone kernels. However, to the best of our knowledge, results on the existence of weak solutions for distribution-dependent stochastic Volterra equations appear to be absent from the current literature.

The existence of strong solutions of distribution-dependent stochastic Volterra equations has been established under various structural assumptions, extending techniques from both stochastic Volterra equations and McKean--Vlasov SDEs. Early results in \cite{Shi2013} prove existence and uniqueness of strong solutions for linear distribution-dependent SVEs. For general distribution-dependent SVEs, strong existence and pathwise uniqueness are obtained under Lipschitz coefficients and possibly singular kernels using fixed-point and Yamada--Watanabe arguments \cite{Promel_MF}. Independently, similar existence and uniqueness results have been extended to settings with H{\"o}lder continuous diffusion coefficients \cite{Jie2024}, as well as to distribution-dependent SVEs with singular kernels under Lipschitz conditions on the coefficients via Volterra-type Gronwall inequalities \cite{Liu2023}. More recently, in \cite{Kalinin2026}, strong solutions have also been derived in Banach space settings with distribution-dependent coefficients and singular kernels based on refined measurability and moment estimates.

\medskip
\noindent \textbf{Organization of the paper:} In Section~\ref{sec:VMP} we introduce a local martingale problem associated to distribution-dependent SVEs. The existence of weak solutions to distribution-dependent SVEs is provided in Section~\ref{sec:construction}. In Section~\ref{sec:rest}, we show continuity and integrability properties of solutions. The Appendix~\ref{sec: appendix} contains an auxiliary result regarding the convergence of Volterra-type integrals.

\medskip
\noindent\textbf{Acknowledgments:} D. J. Pr\"omel gratefully acknowledges his affiliation with the Department of Mathematics at King’s College London, United Kingdom.

\section{Weak solutions and Volterra local martingale problems}\label{sec:VMP}

Let $T\in (0,\infty)$, $d,m\in\N$, and let $(\Omega,\mathcal{F},(\mathcal{F}_t)_{t\in [0,T]},\mathbb{P})$ be a filtered probability space, which satisfies the usual conditions of completeness and right-continuity. Suppose $B=(B_t)_{t\in [0,T]}$ is an $m$-dimensional Brownian motion with respect to $(\mathcal{F}_t)_{t\in [0,T]}$. The law of a random variable~$X$ is denoted by $\mathcal{L}(X)$ and, if $(E,d)$ is a complete separable metric space with distance~$d$, for any $p \geq 1$ we denote by $\mathcal{P}_p(E)$ the space of probability measures with finite $p$-th moments. For $\rho,\tilde{\rho}\in\mathcal{P}_p(E)$, the $p$-Wasserstein distance $W_p(\rho,\tilde{\rho})$ is defined by
\begin{equation*}
  W_p(\rho,\tilde{\rho}):= \inf_{\pi \in \Pi(\rho,\tilde{\rho})} \Big[ \int_{E \times E} d(x,y)^p \,\pi(\d x,\d y)\Big]^{1/p},
\end{equation*}
where $\Pi(\rho,\tilde{\rho})$ denotes the set of probability measures on $E\times E$ with marginals $\rho$ and $\tilde{\rho}$, see \cite[Chapter~5]{Carmona2018} for more details. The space $\R^d$ is equipped with the Euclidean norm~$| \cdot |$ and we set $\Delta_T:=\lbrace (s,t)\in [0,T]\times [0,T]\colon \, 0\leq s\leq t\leq T \rbrace$. For a measure space $\mathcal{X}$, a Banach spaces $\mathcal{Y}$ and $p\geq 1$, we use the notations $L^p(\mathcal{X};\mathcal{Y})$ for the space of all $\mathcal{Y}$-valued, measurable, $p$-integrable functions on $\mathcal{X}$, and we write $L^p(\mathcal{X}):=L^p(\mathcal{X};\R)$.

\medskip

Let $b\colon[0,T]\times\R^d\times\mathcal{P}_{\eta}(\R^d)\to\R^d$, $\sigma\colon[0,T]\times\R^d\times\mathcal{P}_{\eta}(\R^{d})\to\R^{d\times m}$ for some $\eta \in \bN$ and the kernels $K_b, K_\sigma\colon \Delta_T\to \R$ be measurable functions. We consider the $d$-dimensional distribution-dependent stochastic Volterra equation (distribution-dependent SVE)
\begin{equation}\label{eq:MVSVE}
  X_t = X_0 +\int_0^t K_{b}(s,t) b(s,X_s,\mathcal{L}(X_s))\dd s+\int_0^t K_{\sigma}(s,t)\sigma(s,X_s,\mathcal{L}(X_s))\dd B_s,\quad t\in [0,T],
\end{equation}
with initial distribution $\L(X_0)=\mu_0 \in \mathcal{P}_{p}(\R^d)$, that is, $X_0$ is a $d$-dimensional, $\mathcal{F}_0$-measurable random variable with distribution $\mu_0$ and with finite $p$-th moments, $p \geq \eta$, which is independent of $B$. The integral $\int_0^t K_{\sigma}(s,t)\sigma(s,X_s,\mathcal{L}(X_s))\dd B_s$ is defined as a stochastic It{\^o} integral.

\medskip

Let us recall the concepts of well-posedness, strong solutions and pathwise uniqueness. An $(\mathcal{F}_t)$-progressively measurable stochastic process $(X_t)_{t\in [0,T]}$ in $L^p(\Omega\times [0,T];\R^d)$, on the given probability space $(\Omega,\mathcal{F},(\mathcal{F}_t)_{t\in[0,T]},\mathbb{P})$, is called a \textit{(strong) $L^p$-solution} of the distribution-dependent SVE~\eqref{eq:MVSVE} if
\begin{equation*}
  \int_0^t (|K_b(s,t)b(s,X_s,\mathcal{L}(X_s))|+|K_\sigma(s,t)\sigma(s,X_s,\mathcal{L}(X_s))|^2 )\dd s<\infty \quad \text{for all }t\in[0,T],
\end{equation*}
and the integral equation~\eqref{eq:MVSVE} holds $\mathbb{P}$-almost surely. We say \textit{pathwise uniqueness in} $L^p$ holds for the distribution-dependent SVE~\eqref{eq:MVSVE} if $\mathbb{P}(X_t=\tilde{X}_t, \,\forall t\in [0,T])=1$ for any two $L^p$-solutions $(X_t)_{t\in[0,T]}$ and $(\tilde{X}_t)_{t\in[0,T]}$ of \eqref{eq:MVSVE} defined on the same probability space $(\Omega,\mathcal{F},(\mathcal{F}_t)_{t\in[0,T]},\mathbb{P})$. We say that the distribution-dependent SVE~\eqref{eq:MVSVE} is \textit{well-posed in $L^p$} (or that there exists a \textit{unique $L^p$-solution}) for $p\geq 1$ if there exists a strong $L^p$-solution to \eqref{eq:MVSVE} and pathwise uniqueness in $L^p$ holds.

\medskip

Following to the notion of weak solutions to ordinary stochastic differential equations (see e.g. \cite[Definition~5.3.1]{Karatzas1991}) and to stochastic Volterra equations (see e.g. \cite[Definition~2.1]{Promel2023_weak}), we make the following definition.

\begin{definition}\label{def:weak solution}
  A \textup{weak solution} to \eqref{eq:MVSVE} is a triple $((X,B),(\Omega,\mathcal{F},\mathbb{P}),(\mathcal{F}_t)_{t\in [0,T]})$ such that
  \begin{enumerate}
    \item[(i)] $(\Omega,\mathcal{F},\mathbb{P})$ is a probability space, $(\mathcal{F}_t)_{t\in [0,T]}$ is a filtration of sub-$\sigma$-algebras of $\mathcal{F}$ satisfying the usual conditions,
    \item[(ii)] $X=(X_t)_{t\in [0,T]}\in L^1(\Omega~\times~ [0,T]; \mathbb{R}^d)$ is an $(\mathcal{F}_t)$-progressively measurable stochastic process, $B=(B_t)_{t\in [0,T]}$ is an $m$-dimensional Brownian motion w.r.t. $(\mathcal{F}_t)_{t\in [0,T]}$,
    \item[(iii)] $\int_0^t\big(|K_b(s,t)b(s,X_s,\mathcal{L}(X_s))| + |K_\sigma(s,t)\sigma(s,X_s,\mathcal{L}(X_s))|^2  \big)\dd s<\infty$ $\P$-a.s. for any $t\in[0,T]$,
    \item[(iv)] \eqref{eq:MVSVE} holds for $(X,B)$ on $(\Omega,\mathcal{F},\mathbb{P})$, $\P$-a.s., and
    \item[(v)] $X_0$ is $\mu_0$-distributed.
  \end{enumerate}
\end{definition}

Under suitable assumptions on the coefficients and kernels, the existence of weak solutions to the distribution-dependent SVE~\eqref{eq:MVSVE} can be equivalently formulated in terms of solutions to associated local martingale problems, see Definition~\ref{def: MP for stoch process} and~\ref{def:martproblem_new} below. To that end, we make the following assumption, where $\delta_x$ denotes the Dirac measure of $x\in \bR^d$.

\begin{assumption}\label{ass:short}
  Let $K_b, K_\sigma\colon \Delta_T\to \R$ be measurable functions with $K_b(\cdot,t) \in L^1([0,t])$ and $K_\sigma(\cdot,t)\in L^2([0,t])$ for every $t\in[0,T]$, and let $b \colon [0,T]\times\R^d \times \mathcal{P}_{\eta}(\R^d) \to\R^d$ and $\sigma \colon [0,T]\times\R^d \times \mathcal{P}_{\eta}(\R^d) \to\R^{d\times m}$ be measurable functions fulfilling the linear growth condition
  \begin{equation*}
    |b(t,x,\rho)|+|\sigma(t,x,\rho)| \leq C_{b,\sigma} (1 +|x|+W_\eta(\delta_0, \rho)), \quad  t\in [0,T],\, x\in \R^d,\,\rho \in \mathcal{P}_{\eta}(\R^d),
  \end{equation*}
  for some constant $C_{\mu,\sigma}>0$.
\end{assumption}

Let $ C^2(\R^d)$ be the space of twice continuously differentiable functions $f\colon\R^d\to \R$ and $C_0^2(\R^d)$ be the space of all $f\in C^2(\R^d)$ with compact support. For two stochastic processes $X=(X_t)_{t\in [0,T]}$ and $Z=(Z_t)_{t\in [0,T]}$ with $X_0=Z_0$ on a filtered probability space $(\Omega,\mathcal{F},(\mathcal{F}_t)_{t\in[0,T]},\P)$ satisfying the usual conditions, such that $X\in L^1(\Omega\times [0,T];\R^d)$ is $(\mathcal{F}_t)$-progressively measurable and $Z$ is $(\mathcal{F}_t)$-adapted and continuous, we introduce the process $(\mathcal{M}_t^{f})_{t\in[0,T]}$ by
\begin{equation}\label{def:M_f1}
  \mathcal{M}^{f}_t:= f(Z_t)-\int_0^t \mathcal{A}^{f}(s,X_s,\mathcal{L}(X_s),Z_s)\dd s,\quad t\in [0,T],
\end{equation}
for $f\in C^2(\R^d)$, where $\mathcal{A}^{f}\colon[0,T]\times\R^d\times \mathcal{P}_{\eta}(\R^d) \times \R^d\to \R$ with
\begin{equation}\label{eq:operator A}
  \mathcal{A}^{f}(t,x,\rho,z):=\sum_{i=1}^d b_i(t,x,\rho)\frac{\partial f(z)}{\partial z_i}+\frac{1}{2}\sum_{i,j=1}^d (\sigma\sigma^\top)_{ij}(t,x,\rho) \frac{\partial^2 f(z)}{\partial z_i \partial z_j}.
\end{equation}

As we shall see in the next lemma, assuming that $(\mathcal{M}^f_t)_{t\in [0,T]}$ is a local martingale for all $f\in C_0^2(\R)$ implies that the stochastic process~$Z$ is a semimartingale.

\begin{lemma}\label{lem:Zsemimart}
  Suppose Assumption~\ref{ass:short}. Let $(X_t)_{t\in [0,T]}$ be an $(\mathcal{F}_t)$-progressively measurable process in $L^1(\Omega\times [0,T];\R^d)$ and $(Z_t)_{t\in [0,T]}$ be an $(\mathcal{F}_t)$-adapted and continuous $d$-dimensional  stochastic process on a filtered probability space $(\Omega,\mathcal{F},(\mathcal{F}_t)_{t\in[0,T]},\P)$ with $Z_0=X_0$ $\P$-a.s. satisfying the usual conditions. If $(\mathcal{M}^f_t)_{t\in[0,T]}$ is a local martingale for every $f\in C_0^2(\R^d)$, then we have:
  \begin{enumerate}
    \item[(i)] $(Z_t)_{t\in[0,T]}$ is a semimartingale with characteristics
    \begin{equation*}
      \bigg(\int_0^{\cdot} b(s,X_s, \mathcal{L}(X_s))\dd s , \int_0^{\cdot}(\sigma \sigma^\top)(s,X_s,\mathcal{L}(X_s))\dd s,0\bigg).
    \end{equation*}
    \item[(ii)] There exists a filtered probability space $(\tilde{\Omega},\tilde{\mathcal{F}},(\tilde{\mathcal{F}}_t)_{t\in[0,T]},\tilde{\P})$ satisfying the usual conditions such that $(Z_t)_{t\in [0,T]}$ is a semimartingale on $(\tilde{\Omega},\tilde{\mathcal{F}},(\tilde{\mathcal{F}}_t)_{t\in[0,T]},\tilde{\P})$ and
    \begin{equation*}
      Z_t= X_0+\int_0^t b(s,X_s,\mathcal{L}(X_s))\dd s + \int_0^t \sigma(s,X_s,\mathcal{L}(X_s))\dd B_s,\quad t\in[0,T],
    \end{equation*}
    holds $\tilde{\P}$-a.s., for some $m$-dimensional Brownian motion $(B_t)_{t\in[0,T]}$ on the mentioned filtered probability space.
  \end{enumerate}
\end{lemma}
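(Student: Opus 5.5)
The plan is to reduce the statement to the classical characterization of continuous semimartingales via martingale problems (Jacod--Shiryaev, Theorem II.2.42) and to the classical representation of continuous local martingales as stochastic integrals on an enlarged probability space (e.g.\ \cite[Theorem~3.4.2]{Karatzas1991} or Ikeda--Watanabe). The key observation is that, once $X$ is fixed, the measure flow $s\mapsto\mathcal{L}(X_s)$ is deterministic. Hence the processes
\[
  \beta_s:=b(s,X_s,\mathcal{L}(X_s)), \qquad \gamma_s:=(\sigma\sigma^\top)(s,X_s,\mathcal{L}(X_s))
\]
are simply $(\mathcal{F}_t)$-progressively measurable processes, by measurability of $b,\sigma$ and progressive measurability of $X$. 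The law dependence therefore plays no structural role.

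First I check integrability. Since $X\in L^1(\Omega\times[0,T];\R^d)$, Fubini gives $\int_0^T|X_s|\dd s<\infty$ a.s. The moments $W_\eta(\delta_0,\mathcal{L}(X_s))=\bE[|X_s|^\eta]^{1/\eta}$ are implicitly assumed finite, since otherwise $\mathcal{A}^f$ is not defined; I would note that one needs $s\mapsto W_\eta(\delta_0,\mathcal{L}(X_s))$ to be locally integrable (square integrable for $\gamma$), and this is implicit in the requirement that $\mathcal{M}^f$ be well defined. By Assumption~\ref{ass:short}, $\int_0^T|\beta_s|\dd s<\infty$ a.s. Some care is needed for $\gamma$, because linear growth gives $|\gamma_s|\lesssim 1+|X_s|^2+W_\eta^2$. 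I would handle this by noting that the hypothesis that $\mathcal{M}^f$ is a well-defined (local martingale) process already requires $\int_0^t\mathcal{A}^f\dd s$ to be finite. Choosing $f$ that equals $z_iz_j$ on large balls, and localizing by exit times of the continuous process $Z$ from those balls, yields the needed finiteness of $\int_0^t|\gamma_s|\dd s$.

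For (i), I will show that $B^Z=\int_0^\cdot\beta_s\dd s$ and $C=\int_0^\cdot\gamma_s\dd s$ are the characteristics, with third characteristic $\nu=0$. The local martingale property of $f(Z_t)-\int_0^t\mathcal{A}^f(s,X_s,\mathcal{L}(X_s),Z_s)\dd s$ for all $f\in C_0^2$ is exactly condition (iv)$\Rightarrow$(i) of Jacod--Shiryaev Theorem II.2.42, with $\nu=0$ and the truncation function irrelevant because $Z$ is continuous. In that theorem $B$ and $C$ need to be predictable, which holds here: they are continuous and adapted. To make the argument self-contained, I would alternatively do it directly. Localize with $\tau_n=\inf\{t:|Z_t|\ge n\}$ and take $f\in C_0^2$ with $f(z)=z_i$ and $f(z)=z_iz_j$ on $\{|z|\le n\}$. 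This shows that $M_t:=Z_t-Z_0-\int_0^t\beta_s\dd s$ is a continuous local martingale, and, after an It\^o product computation, that $M^iM^j-\int_0^\cdot\gamma^{ij}_s\dd s$ is a local martingale. Hence $\langle M^i,M^j\rangle=\int_0^\cdot\gamma^{ij}_s\dd s$, so $Z$ is a continuous semimartingale with the stated characteristics.

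For (ii), $M$ is a continuous local martingale with $\langle M\rangle_t=\int_0^t\sigma\sigma^\top(s,X_s,\mathcal{L}(X_s))\dd s$ and $\sigma$ progressive. By the martingale representation theorem (\cite[Theorem~3.4.2]{Karatzas1991}), there is an extension $(\tilde\Omega,\tilde{\mathcal{F}},(\tilde{\mathcal{F}}_t),\tilde\P)$ of the original space, namely a product with an auxiliary space carrying an independent $m$-dimensional Brownian motion, completed to satisfy the usual conditions. On this extension there is an $m$-dimensional Brownian motion $B$ with $M_t=\int_0^t\sigma(s,X_s,\mathcal{L}(X_s))\dd B_s$. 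Since the extension preserves the laws of $X$ and $Z$ and their semimartingale property, the marginals $\mathcal{L}(X_s)$ are unchanged and the displayed identity follows, using $Z_0=X_0$. I expect the main obstacle to be the integrability and localization step justifying that $\int_0^\cdot\gamma_s\dd s$ is finite and that it is the quadratic variation, together with correctly handling the non-square ($d\times m$) representation. The latter is covered by the cited theorem.
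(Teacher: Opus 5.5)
Your proposal is correct. Part (ii) is in substance the paper's argument: the paper cites the representation theorem for It\^o semimartingales (Jacod--Protter, Theorem~2.1.2), and you cite martingale representation on an extension.

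Part (i) takes a different route.
\begin{itemize}
\item \textbf{Paper.} It applies Jacod--Shiryaev, Theorem~II.2.42. That criterion is stated for \emph{bounded} $C^2$ test functions, not $C_0^2$ ones, so the paper's work is the extension step. It replaces $f$ by $\phi_n f$, stops at hitting times of $\max(|X|,|Z|)$, and argues that the stopped processes are bounded martingales. Your first option, citing II.2.42 directly for $f\in C_0^2$, skips exactly this step.
\item \textbf{Your alternative.} It is self-contained. You test with functions equal to $z_i$ and $z_iz_j$ near balls and stop only at exit times of the continuous process $Z$. You then identify $M=Z-Z_0-\int_0^\cdot\beta_s\dd s$ and, by the product rule, $\langle M^i,M^j\rangle=\int_0^\cdot\gamma^{ij}_s\dd s$.
\end{itemize}

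What your route buys:
\begin{itemize}
\item Since stopping a local martingale is all you need, the stopped processes never have to be bounded. You therefore need neither local boundedness of the paths of the merely progressively measurable $L^1$ process $X$ (which the paper's claim $\tau_n\to T$ relies on) nor a uniform bound on $W_\eta(\delta_0,\L(X_s))$ (implicit in the paper's bound on $|\mathcal{M}^{f_n}_{t\wedge\tau_n}|$).
\item You obtain $\int_0^t|\gamma_s|\dd s<\infty$ from the well-definedness of $\mathcal{M}^f$.
\item You hand $M$ and $\langle M\rangle$ directly to part (ii).
\item You check that the extension leaves $\L(X_s)$ unchanged, which matters in the distribution-dependent setting.
\end{itemize}
The paper does not address any of these points. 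Its route is shorter and reads off the full triple of characteristics from a general theorem; yours is more elementary and closes gaps the paper leaves implicit.

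One citation needs adjusting. Karatzas--Shreve, Theorem~3.4.2, as stated, gives some $d\times d$ integrand whose square is the density of $\langle M\rangle$, not the prescribed $d\times m$ matrix $\sigma(s,X_s,\L(X_s))$. To get $M=\int_0^\cdot\sigma(s,X_s,\L(X_s))\dd B_s$ with an $m$-dimensional $B$, cite Ikeda--Watanabe, Theorem~II.7.1$'$ (as the paper does in the proof of Lemma~\ref{lem:limitprocesses}), or use the pseudo-inverse argument from the martingale-problem section of Karatzas--Shreve, Section~5.4.
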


\begin{proof}
  (i) By \cite[Theorem~II.2.42]{Jacod2003}, in order to prove the assertion, it is sufficient to show that $(\mathcal{M}_t^f)_{t\in[0,T]}$, defined in~\eqref{def:M_f1}, is a local martingale for every bounded function $f\in C^2(\R^d)$.

  Let $f\in C^2(\R^d)$ be a bounded function and define the hitting times
  \begin{equation*}
    \tau_n:=\inf\limits_{t\in[0,T]}\lbrace \max(|X_t|,|Z_t|)\geq n\rbrace,\quad n\in\N.
  \end{equation*}
  Note that $\tau_n\to T$ a.s. as $n\to\infty$ since $X\in L^1(\Omega\times [0,T];\R^d)$ and $Z$ is continuous. Since the underlying filtered probability space satisfies the usual conditions, by the D{\'e}but theorem (see \cite[Chapter~I, (4.15) Theorem]{Revuz1999}), the hitting times $(\tau_n)_{n\in\N}$ are stopping times. Therefore, it remains to show that $(\tau_n)_{n\in \N}$ is a localizing sequence for $(\mathcal{M}_t^f)_{t\in[0,T]}$. For this purpose, we approximate $f$ by the functions $(f_n)_{n\in\N}\subset C^2_0(\R)$ given by $f_n := \phi_n f$ for some $\phi_n\in C_0^2(\R)$ taking values in $[0,1]$ and being identical to~$1$ on $[-n,n]$. Hence, $(\mathcal{M}_t^{f_n})_{t\in[0,T]}$ is a local martingale for every $n\in\N$ and, thus, the stopped process $(\mathcal{M}_{t\wedge \tau_n}^{f_n})_{t\in[0,T]}$, given by
  \begin{equation*}
    \mathcal{M}_{t\wedge\tau_n}^{f_n}=(f_n)(Z_{t\wedge\tau_n})-\int_0^{t\wedge\tau_n} \mathcal{A}^{f_n}(s,X_s,\L(X_s),Z_s)\dd s,\quad t\in[0,T],
  \end{equation*}
  is a martingale since
  \begin{equation*}
    |\mathcal{M}_{t\wedge\tau_n}^{f_n}|\leq \sup_{x\in \R}|f(x)| + C_{\sigma, b, n} n^2<\infty,
  \end{equation*}
  for some constant $C_{\sigma, b, n}>0$, using the definition of $\tau_n$ and the linear growth condition on $b$ and $\sigma$. Because $ \mathcal{M}_{t\wedge\tau_n}^{f_n}= \mathcal{M}_{t\wedge\tau_n}^{f}$ for $t\in [0,T]$, $(\mathcal{M}_{t\wedge\tau_n}^{f})_{t\in [0,T]}$ is a martingale for every $n\in \N$ and, thus, $(\tau_n)_{n\in \N}$ a localizing sequence for $(\mathcal{M}^{f}_t)_{t\in [0,T]}$.

  (ii) Since the stochastic process $(Z_t)_{t\in[0,T]}$ is a semimartingale with absolutely continuous characteristics
  \begin{equation*}
    \bigg(\int_0^{\cdot} b(s,X_s, \mathcal{L}(X_s))\dd s ,\int_0^{\cdot}(\sigma \sigma^\top)(s,X_s,\mathcal{L}(X_s))\dd s,0\bigg),
  \end{equation*}
  the assertion follows by \cite[Theorem~2.1.2]{Jacod2012}.
\end{proof}

Building on these preliminary observations and the martingale problem for stochastic Volterra equations (see \cite[Definition~2.4]{Promel2023_weak}), we formulate a local martingale problem associated to the distribution-dependent stochastic Volterra equation~\eqref{eq:MVSVE}. For a comparison to the classical martingale problem for ordinary stochastic differential equations we refer, e.g., to \cite[Definition~7.1.1]{Kallianpur2014}).

\begin{definition}\label{def: MP for stoch process}
  A \textup{solution to the distribution-dependent Volterra local martingale problem} given $(\mu_0,b,\sigma,K_b,K_\sigma)$ is a tuple $(Z,(\Omega,\mathcal{F},(\mathcal{F}_t)_{t\in [0,T]},\mathbb{P}))$ such that
  \begin{enumerate}
    \item[(i)] $(\Omega,\mathcal{F}, (\mathcal{F}_t)_{t\in [0,T]},\P)$ is a filtered probability space, where $(\mathcal{F}_t)_{t\in [0,T]}$ fulfills the usual conditions,
    \item[(ii)] $(Z_t)_{t\in[0,T]}$ is a continuous $d$-dimensional semimartingale with $Z_0 \sim \mu_0$,
    \item[(iii)]the process $(\mathcal{M}^f_t)_{t\in[0,T]}$, given by
    \begin{equation*}\label{def:M_f}
      \mathcal{M}^f_t:= f(Z_t)-\int_0^t \mathcal{A}^f(s,X_s,\L(X_s),Z_s)\dd s,\quad t\in [0,T],
    \end{equation*}
    is a local martingale for every $f\in C_0^2(\R^d)$, where $\mathcal{A}^f$ is defined as in \eqref{eq:operator A},
    \begin{equation}\label{eq:X_MP}
      X_t := Z_0 +\int_0^t K_b(s,t)\dd A_s + \int_0^tK_\sigma(s,t)\dd M_s, \quad t\in[0,T],
      \quad\P\text{-a.s.},
    \end{equation}
	$(A_t)_{t\in[0,T]}$ is predictable of bounded variation and $(M_t)_{t\in[0,T]}$ is a local martingale with $M_0=0$, such that $Z=A+M$.
  \end{enumerate}
\end{definition}

Classically, the concept of a (local) martingale problem associated to a stochastic differential equation is often introduced directly for probability measures on the canonical space, see, e.g. \cite[Chapter~6]{Stroock1979} and \cite[Chapter~5]{Karatzas1991}. To formulate an analogous martingale problem for distribution-dependent SVEs, we assume that
\begin{equation*}
  Z\colon C([0,T];\R^d) \to \R^d, \quad \text{with}
  \quad Z_t(\omega):= \omega(t)
  \quad \text{for }\omega\in C([0,T];\R^d),
\end{equation*}
is the canonical process on the path space $C([0,T];\R^d)$, where $C([0,T];\R^d)$ denotes the space of all continuous functions $\omega\colon [0,T]\to \R^d$ equipped with the supremum norm $\|\cdot\|_{\infty}$ and $\mathcal{B}(C([0,T];\R^d))$ the Borel $\sigma$-algebra on $C([0,T];\R^d)$.

\begin{definition}\label{def:martproblem_new}
  A probability measure $P$ on $(C([0,T];\R^d),\mathcal{B}(C([0,T];\R^d)))$ is called \textup{solution to the distribution-dependent Volterra local martingale problem} given $(\mu_0,b,\sigma,K_b,K_\sigma)$ if
  \begin{enumerate}
    \item[(i)] $(Z_t)_{t\in[0,T]}$ is a $d$-dimensional semimartingale and $Z_0 \sim \mu_0$, 
    \item[(ii)]the process $(\mathcal{M}^f_t)_{t\in[0,T]}$, given by
    \begin{equation}\label{def:M_f_new}
      \mathcal{M}^f_t:= f(Z_t)-\int_0^t \mathcal{A}^f(s,X_s,\L(X_s),Z_s)\dd s,\quad \mathcal{F}_t, \quad t\in [0,T],
    \end{equation}
    is a local martingale for every $f\in C_0^2(\R^d)$, where $\mathcal{A}^f$ is defined as in \eqref{eq:operator A},
    \begin{equation}\label{eq:X_MP_new}
      X_t := Z_0 +\int_0^t K_b(s,t)\dd A_s + \int_0^tK_\sigma(s,t)\dd M_s, \quad t\in[0,T],
      \quad P\text{-a.s},
    \end{equation}
	$(A_t)_{t\in[0,T]}$ is predictable of bounded variation and $(M_t)_{t\in[0,T]}$ is a local martingale with $M_0=0$, such that $Z=A+M$. Here $\mathcal{F}_t = \mathcal{G}_{t+}$, and $\{\mathcal{G}_t\}$ is the augmentation under $P$ of the canonical filtration $\mathcal{B}_t := \sigma(Z_s\, :\, s\in [0,t])$. % Für die Filtration, siehe KS S. 307 und S.314)
  \end{enumerate}
\end{definition}

\begin{remark}
  Condition (ii) of Definition~\ref{def: MP for stoch process} and Condition (i) of Definition~\ref{def:martproblem_new} can be relaxed to the condition ``$(Z_t)_{t\in [0,T]}$ is an $(\mathcal{F}_t)$-adapted and continuous process'', respectively. Indeed, this requirement together with the other conditions already implies the semimartingale property of $(Z_t)_{t\in [0,T]}$, see Lemma~\ref{lem:Zsemimart}. However, we choose to postulate the semimartingale property directly in order to ensure that the definition is obviously well-posed.
\end{remark}

\begin{remark}
  In the case of $K_{b}=K_{\sigma}=K$ condition \eqref{eq:X_MP_new} simplifies to
  \begin{equation*}
    X_t := Z_0 +\int_0^t K(s,t)\dd Z_s , \quad t\in[0,T], \quad\P\text{-a.s}.
  \end{equation*}
  Hence, it is straightforward to see that in the case of $K = 1$ we have $X = Z$ and the distribution-dependent Volterra local martingale problem reduces to the standard martingale problem for (McKean--Vlasov) stochastic differential equations, see e.g. \cite{Stroock1979,Funaki1984}. Moreover, if the coefficients $b$ and $\sigma$ are independent of the law $\mathcal{L}(X_{\cdot})$, the distribution-dependent Volterra local martingale problem in Definition~\ref{def: MP for stoch process} coincides with to the Volterra local martingale problem as stated in \cite[Definition~2.4]{Promel2023_weak}, and, assuming additionally that the kernels are of convolutional type, it is equivalent to the Volterra local martingale problem presented in \cite[Definition~3.1]{AbiJaber2021}.
\end{remark}

In the next proposition, we demonstrate that the existence of weak solutions to distribution-dependent SVEs is equivalent to the solvability of the associated Volterra local martingale problems.

\begin{proposition}\label{prop:equivalence}
  Suppose that $(\mu_0,b,\sigma,K_b,K_\sigma)$ satisfy Assumption~\ref{ass:short}. Then, the following statements are equivalent.
  \begin{enumerate}
    \item[(i)] There exists a weak solution $((X,B),(\Omega,\mathcal{F},\mathbb{P}),(\mathcal{F}_t)_{t\in [0,T]})$ to the distribution-dependent SVE~\eqref{eq:MVSVE} in the sense of Definition~\ref{def:weak solution}.

    \item[(ii)] There exists a solution $(Z,(\Omega,\mathcal{F},(\mathcal{F}_t)_{t\in [0,T]},\mathbb{P}))$ to the distribution-dependent Volterra local martingale problem in the sense of Definition~\ref{def: MP for stoch process}.

    \item[(iii)] There exists a solution $P$ to the distribution-dependent Volterra local martingale problem in the sense of Definition~\ref{def:martproblem_new}.
  \end{enumerate}
\end{proposition}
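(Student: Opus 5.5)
I will prove the cycle (i)$\Rightarrow$(ii)$\Rightarrow$(iii)$\Rightarrow$(ii)$\Rightarrow$(i); the two middle implications are essentially a transfer between an abstract filtered space and the canonical path space.

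\emph{(i)$\Rightarrow$(ii).} Given a weak solution $((X,B),(\Omega,\mathcal{F},\P),(\mathcal{F}_t))$, I define the continuous semimartingale
\begin{equation*}
  Z_t := X_0 + \int_0^t b(s,X_s,\L(X_s))\dd s + \int_0^t \sigma(s,X_s,\L(X_s))\dd B_s ,
\end{equation*}
and split it as $A_t=\int_0^t b(s,X_s,\L(X_s))\dd s$ and $M_t=\int_0^t\sigma(s,X_s,\L(X_s))\dd B_s$. Here $A$ is continuous and adapted, hence predictable, and it has finite variation. The process $M$ is a continuous local martingale, because Assumption~\ref{ass:short} together with $X\in L^1(\Omega\times[0,T])$ makes $\int_0^T|b|+|\sigma|^2\dd s$ finite almost surely. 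To see this, note that $X$ has a.s.\ locally integrable paths, and the linear growth bound involves $W_\eta(\delta_0,\L(X_s))$. I will need $\int_0^T (1+\bE|X_s|^\eta)^{2/\eta}\dd s<\infty$, which requires an integrability argument or implicitly $X\in L^\eta$. Applying It\^o's formula to $f(Z)$ for $f\in C_0^2$ shows that $\mathcal{M}^f$ is a stochastic integral against $B$, hence a local martingale. Finally, $\int_0^t K_b(s,t)\dd A_s+\int_0^t K_\sigma(s,t)\dd M_s$ is exactly the right-hand side of \eqref{eq:MVSVE}, so \eqref{eq:X_MP} holds with the given $X$.

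\emph{(ii)$\Leftrightarrow$(iii).} For (ii)$\Rightarrow$(iii), take $P:=\P\circ Z^{-1}$ on $C([0,T];\R^d)$. The functionals $A$ and $M$ can be chosen as measurable functionals of the path $Z$: $A$ is the canonical predictable drift from the characteristics, and $M=Z-Z_0-A$. Consequently $X$ is also a functional of $Z$, namely through stochastic integrals, which can be realised as path functionals (e.g.\ via the Bichteler--Karandikar construction). The law $\L(X_s)$ is therefore the same under $P$ and under $\P$. The local martingale property of $\mathcal{M}^f$ is preserved under passage to the canonical (augmented, right-continuous) filtration, since $\mathcal{M}^f$ is adapted to the smaller filtration generated by $Z$, and the localizing times $\tau_n$ from the proof of Lemma~\ref{lem:Zsemimart} are functionals of $Z$. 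The implication (iii)$\Rightarrow$(ii) is immediate: take the canonical space with $\mathcal{F}_t=\mathcal{G}_{t+}$.

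\emph{(ii)$\Rightarrow$(i).} This is the main step. By Lemma~\ref{lem:Zsemimart}(ii), after possibly enlarging the probability space, there is a Brownian motion $B$ such that $Z_t=X_0+\int_0^t b(s,X_s,\L(X_s))\dd s+\int_0^t\sigma(s,X_s,\L(X_s))\dd B_s$. The characteristics are deterministic functionals of $Z$, so the canonical decomposition $Z=A+M$ is unique, and identifying it gives $A=\int_0^\cdot b(s,X_s,\L(X_s))\dd s$ and $M=\int_0^\cdot \sigma(s,X_s,\L(X_s))\dd B_s$. Substituting into \eqref{eq:X_MP} yields
\begin{equation*}
  X_t = X_0+\int_0^t K_b(s,t)\,b(s,X_s,\L(X_s))\dd s+\int_0^t K_\sigma(s,t)\,\sigma(s,X_s,\L(X_s))\dd B_s ,
\end{equation*}
using associativity of stochastic integrals for each fixed $t$, where $s\mapsto K_\sigma(s,t)$ is deterministic.

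I expect two delicate points. The first is that the enlargement of the space must leave $\L(X_s)$ unchanged; this holds because it is a product extension. The second is the integrability condition (iii) of Definition~\ref{def:weak solution}, which I will obtain from $K_b(\cdot,t)\in L^1$, $K_\sigma(\cdot,t)\in L^2$ and the local boundedness coming from the localization by $\tau_n$. Progressive measurability of $X$ needs a measurable version of the two-parameter integrals. I would handle it as in \cite{Promel2023_weak}, which is the natural precedent.
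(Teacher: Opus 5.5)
\textbf{Overall.} Your architecture matches the paper's: It\^o's formula for (i)$\Rightarrow$(ii), the push-forward $P=\P\circ Z^{-1}$ for (ii)$\Rightarrow$(iii), and Lemma~\ref{lem:Zsemimart} with uniqueness of the canonical decomposition and associativity of the stochastic integral for the way back. The way back is fine. Condition (iii) of Definition~\ref{def:weak solution} is then just the existence of the integrals in \eqref{eq:X_MP}, once $\d A=b\,\d s$ and $\d\langle M\rangle=\sigma\sigma^\top\d s$ are identified. Localizing by $\tau_n$ is not needed, and it would not help if $X$ has unbounded paths. However, the other two steps do not go through as you wrote them.

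\textbf{The gap in (ii)$\Rightarrow$(iii).} You claim that $X$ is a functional of $Z$, and that $\mathcal{M}^f$ and $\tau_n$ are adapted to the augmented filtration $(\mathcal{F}^Z_t)$ of $Z$. Nothing yields this. The $(\mathcal{F}_t)$-drift $Z_0+\int_0^\cdot b(s,X_s,\L(X_s))\dd s$ need not be $(\mathcal{F}^Z_t)$-adapted. The canonical decomposition of $Z$ in its own filtration has drift density $\bE[b(s,X_s,\L(X_s))\,|\,\mathcal{F}^Z_s]$ instead, and for $K_b\neq K_\sigma$ this changes the process defined by \eqref{eq:X_MP_new}. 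Bichteler--Karandikar only helps once the decomposition is already a functional of $Z$.

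The step genuinely fails. Take $d=m=1$, no law dependence, $K_b\equiv1$, $K_\sigma\equiv0$, $\sigma\equiv1$, $b(x)=3|x|^{2/3}$ and $\mu_0=\delta_0$. Let $X_t=(t-\theta)_+^3$, where $\theta\sim U[0,T]$ is $\mathcal{F}_0$-measurable and independent of $B$. This is a weak solution, and $Z=X+B$ solves (ii). Under $P=\P\circ Z^{-1}$ with the canonical filtration, \eqref{eq:X_MP_new} produces $X=\hat A:=\int_0^\cdot\hat\beta_s\dd s$ with $\hat\beta_s=3\bE[(s-\theta)_+^2\,|\,\mathcal{F}^Z_s]$. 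The local martingale property then forces $\hat\beta=3|\hat A|^{2/3}$, i.e.\ $\hat A_t=(t-\hat\theta)_+^3$. By Girsanov the conditional law of $\theta$ is equivalent to its prior, so $\hat\beta_t>0$ for $t>0$ and hence $\hat\theta=0$. This gives $3t^2=\bE[\hat\beta_t]=3\bE[(t-\theta)_+^2]=t^3/T$, a contradiction.

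The push-forward does work when $X$ is $(\mathcal{F}^Z_t)$-adapted. This holds, e.g., always for $K_b=K_\sigma=K$, where $X_t=Z_0+\int_0^tK(s,t)\dd Z_s$. Otherwise you need this adaptedness as an extra input, or a canonical space carrying more than $Z$. The paper's proof states this push-forward in one sentence and has the same gap.

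\textbf{The gap in (i)$\Rightarrow$(ii).} Neither $X\in L^1(\Omega\times[0,T];\R^d)$ nor condition (iii) of Definition~\ref{def:weak solution}, which only controls $K_\sigma(s,t)\sigma$, gives $\int_0^T|\sigma(s,X_s,\L(X_s))|^2\dd s<\infty$. Under linear growth $|\sigma|^2$ is of order $|X_s|^2$, so $M=\int_0^\cdot\sigma\dd B$ and $\mathcal{M}^f$ need not exist. Take $T=1$, $b\equiv1$, $\sigma(t,x,\rho)=x$, $K_\sigma\equiv0$, $K_b(s,t)=|t-\tfrac12|^{-1/2}$ with $K_b(s,\tfrac12):=0$, and $X_0=0$. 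Then the deterministic $X_t=t|t-\tfrac12|^{-1/2}$ is a weak solution with $\int_0^1X_s^2\dd s=\infty$. In this example (ii) has no solution at all: any solution must have $A_t=t$, hence this same $X$. Then $\mathcal{M}^f$ is undefined for $f\in C_0^2(\R)$ with $f(z)=z^2$ on a large ball.

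So no integrability argument closes this. You need an extra hypothesis such as $\int_0^T(|X_s|^2+W_\eta(\delta_0,\L(X_s))^2)\dd s<\infty$ a.s., which also settles your concern about $\L(X_s)\in\mathcal{P}_\eta(\R^d)$. The paper's proof does not address this either.
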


\begin{proof}
  Let $(X,B)$ be a (weak) solution to~\eqref{eq:MVSVE} on a probability space $(\Omega,\mathcal{F},\P)$ with a suitable filtration $(\mathcal{F}_t)_{t\in [0,T]}$ satisfying the usual conditions. Setting
  \begin{equation*}
    Z_t:=A_t+M_t:= X_0 + \int_0^t b(s,X_s,\mathcal{L}(X_s))\dd s+\int_0^t \sigma(s,X_s,\mathcal{L}(X_s))\dd B_s,\quad t\in [0,T],
  \end{equation*}
  (i) and \eqref{eq:X_MP} hold. It{\^o}'s formula applied to $f(Z_t)$ for $f\in C_0^2(\R^d)$ yields that
  \begin{align*}
    \mathcal{M}_t^f =f(Z_t)&-\int_0^t \sum_{i=1}^d b_i(s,X_s,\L(X_s))\frac{\partial f(Z_s)}{\partial z_i}\dd s \\
    &- \int_0^t \frac{1}{2}\sum_{i,j=1}^d (\sigma\sigma^\top)_{ij}(s,X_s,\L(X_s)) \frac{\partial^2 f(Z_s)}{\partial z_i \partial z_j}\dd s\\
    =f(Z_0)&+ \sum_{j=1}^m \int_0^t \sum_{i=1}^d \frac{\partial f(Z_s)}{\partial z_i} \sigma_{ij}(s,X_s,\mathcal{L}(X_s))\dd B^j_s,
  \end{align*}
  which is a local martingale and, by its definition, $Z$ is a semimartingale satisfying~\eqref{def:M_f}. Hence, $(Z,(\Omega,\mathcal{F},(\mathcal{F}_t)_{t\in [0,T]},\P))$ is a solution to the distribution-dependent Volterra local martingale problem in the sense of Definition~\ref{def: MP for stoch process}. Consequently, the probability measure $P := \mathbb{P} \circ Z^{-1}$, induced by $Z$ on $(C([0,T];\R^d),\mathcal{B}(C([0,T];\R^d)))$, is a solution to the distribution-dependent Volterra local martingale problem in the sense of Definition~\ref{def:martproblem_new}.

  Conversely, if there exists a solution $P$ to the distribution-dependent Volterra local martingale problem in the sense of Definition~\ref{def:martproblem_new}, we obtain a weak solution to the distribution-dependent SVE~\eqref{eq:MVSVE} by using \eqref{eq:X_MP_new} and Lemma~\ref{lem:Zsemimart}, which yields that
  \begin{equation*}
    A_t= X_0+\int_0^t b(s,X_s,\mathcal{L}(X_s))\dd s
    \quad\text{and}\quad
    M_t=\int_0^t \sigma(s,X_s,\mathcal{L}(X_s))\dd B_s
  \end{equation*}
  for a Brownian motion~$(B_t)_{t\in [0,T]}$ on a suitable filtered probability space $(\Omega,\mathcal{F},(\mathcal{F}_t)_{t\in [0,T]},\P)$.
  %
  %Definition~\ref{def:martproblem_new} can be considered a special case of Definition~\ref{def: MP for stoch process} with a solution of the former together with the filtered probability space defined therein directly matching the definition of the latter. Vice versa, when having a solution $(Z,(\Omega,\mathcal{F},(\mathcal{F}_t)_{t\in [0,T]},\mathbb{P}))$ to the distribution-dependent Volterra local martingale problem in Definition~\ref{def: MP for stoch process} we can take $P := \mathbb{P} \circ Z^{-1}$ to obtain a solution in the sense of Definition~\ref{def:martproblem_new}.
\end{proof}

\section{Existence of weak solutions to distribution-dependent SVEs}\label{sec:construction}

In this section we establish the existence of a solution to the distribution-dependent Volterra local martingale problem given $(\mu_0, b,\sigma,K_b,K_\sigma)$ which, by Proposition~\ref{prop:equivalence}, gives us the existence of a weak solution to the distribution-dependent SVE~\eqref{eq:MVSVE} with initial condition $\mathcal{L}(X_0)=\mu_0$. To that end, we need to impose suitable assumptions on the coefficients and kernels.

\begin{assumption}\label{ass:volatility}
  There is an $\eta \geq 1$ such that the coefficients $b: [0,T] \times \mathbb{R}^d \times \mathcal{P}_\eta(\mathbb{R}^d)\to \bR^{d}$ and $\sigma: [0,T] \times \mathbb{R}^d \times \mathcal{P}_\eta(\mathbb{R}^d)\to \bR^{d \times m}$ are measurable functions satisfying:
  \begin{itemize}
    \item there exists a constant $C > 0$ such that, for all $(t,x,\mu) \in [0,T] \times \mathbb{R}^d \times \mathcal{P}_\eta(\mathbb{R}^d)$,
    \begin{align*}
      |\sigma(t,x,\mu)| + |b(t,x,\mu)| &\leq C(1+|x|+W_\eta(\delta_0,\mu)),
    \end{align*}
    \item for every compact set $\mathcal K \subset \bR^d$ and every $\epsilon > 0$ there exists a $\delta > 0$ such that
    \begin{equation*}
      |b(t,x,\mu)-b(t,y,\nu)|+|\sigma(t,x,\mu)-\sigma(t,y,\nu)| \leq \epsilon
    \end{equation*}
    holds for all $t \in [0,T], x,y \in \mathcal K$ satisfying $|x-y| \leq \delta$, and $\mu, \nu \in \mathcal{P}_\eta(\bR^d)$ satisfying $W_\eta(\mu,\nu)\leq \delta$.
  \end{itemize}
\end{assumption}

\begin{assumption}\label{ass:kernel}
  There are constants $\gamma\in (0,\frac{1}{2}]$, $p\in [1,\infty)$ and $\epsilon>0$ such that:
  \begin{itemize}
    \item $K_b, K_\sigma\colon \Delta_T\to \mathbb{R}$ are measurable functions fulfilling, for some $L>0$,
    \begin{align*}
      &\int_0^t |K_{b}(s,t')-K_{b}(s,t)|^{1+\epsilon}\dd s + \int_t^{t'} |K_{b}(s,t')|^{1+\epsilon}\dd s \leq L|t'-t|^{\gamma(1+\epsilon)},\\
      &\int_0^t |K_{\sigma}(s,t')-K_{\sigma}(s,t)|^{2+\epsilon}\dd s  + \int_t^{t'} |K_{\sigma}(s,t')|^{2+\epsilon}\dd s  \leq L|t'-t|^{\gamma(2+\epsilon)},
    \end{align*}
    for all $(t,t^\prime)\in \Delta_T$;
    \item $\mu_0 \in \mathcal{P}_{p}(\R^d)$  for some $p > \max\{\frac{2 \eta + 1}{\gamma},\frac{4+2\varepsilon}{\varepsilon}\}$.
  \end{itemize}
\end{assumption}

%\begin{remark}
%  In the case of classical stochastic Volterra equation, i.e. no dependency on the law, the condition on the initial condition can be relaxed to $\mu_0 \in \mathcal{P}_{p}(\R^d)$ for some $p > \max\{\frac{1}{\gamma},\frac{4+2\varepsilon}{\varepsilon}\}$. Hence, Theorem~\ref{thm: weak solution to mean-field SVEs} is a generalization of the results in \cite{Promel2023_weak} even in the classical distribution-independent Volterra case.
%\end{remark}

In the following we fix the constants $p$,$\gamma$, and $\varepsilon$ as defined in Assumption~\ref{ass:kernel}.

\begin{remark}\label{rem:asskernel_to_assshort}
  Note that Assumption~\ref{ass:kernel} implies Assumption~\ref{ass:short} since
  \begin{equation*}
    \int_0^{t} |K_b(s,t)| \dd s \leq \int_0^{t} |K_b(s,t)|^{1+\epsilon} \dd s \leq L t^{\gamma(1+\epsilon)}
  \end{equation*}
  and
  \begin{equation*}
    \int_0^{t} |K_\sigma(s,t)|^2 \dd s \leq\int_0^{t} |K_\sigma(s,t)|^{2+\epsilon} \dd s \leq L t^{\gamma(2+\epsilon)},
  \end{equation*}
  that is, $K_b(\cdot,t) \in L^1([0,t])$ and $K_\sigma(\cdot,t) \in L^2([0,t])$, for every $ t \in [0,T]$. Moreover, let us remark, because of
  \begin{align*}
    \int_0^T \int_0^{t} |K_b(s,t)| \dd s \dd t \leq L \int_0^T t^{\gamma(1+\epsilon)} \dd t \leq \frac{LT^{\gamma(1+\epsilon)+1}}{\gamma (1 + \epsilon)+1}
  \end{align*}
  and
  \begin{align*}
    \int_0^T \int_0^{t} |K_\sigma(s,t)|^2 \dd s \dd t \leq L \int_0^T t^{\gamma(2+\epsilon)} \dd t \leq \frac{LT^{\gamma(2+\epsilon)+1}}{\gamma (2 + \epsilon)+1},
  \end{align*}
  we have $K_b \in L(\Delta_T)$ and $K_\sigma \in L^2(\Delta_T)$.
\end{remark}

\begin{remark} 
  Assumption~\ref{ass:kernel} is satisfied, e.g., by the following type of diffusion kernels:
  \begin{enumerate}
    \item[(i)] $K_\sigma(s,t):=C(t-s)^{-\alpha}$ for $\alpha\in(0,\frac{1}{2})$ with $\varepsilon \in (0,\frac{1}{\alpha}-2)$ and $\gamma \in (0,\frac{1}{2+\varepsilon}-\alpha]$,
    \item[(ii)] $K_\sigma(s,t):=\tilde{K}(t-s)$ for a Lipschitz continuous function $\tilde{K}\colon [0,T]\to \R$,
    \item[(iii)] weakly differentiable kernels such that $\partial_1 K_\sigma(s,t)\leq C(t-s)^{-\alpha}$ for $\alpha\in(0,\frac{1}{2})$, and
    \item[(iv)] kernels fulfilling \cite[Assumption~2.1]{Promel2023},
    \item[(v)] a finite combination of exponential kernel $K_\sigma(s,t) = \sum_{i=1}^k c_i \exp(- \theta_i (t-s))$ with $k \in \bN$, $c_i >0$ and $\theta_i \in \bR^+$ for $i = 1, \ldots, k$ with $\gamma \in (0, \frac{1}{2+ \varepsilon}]$,
      \item[(vi)] the gamma kernel $K_\sigma(s,t) = \frac{1}{\Gamma(\alpha)} \exp(- \beta (t-s)) (t-s)^{\alpha -1}$ with $\beta >0$ and exponents $\alpha \in (\frac{1}{2},1)$ with $\varepsilon< \frac{2 \alpha - 1}{1-\alpha}$ and $\gamma = \alpha - \frac{1+\varepsilon}{2+\varepsilon}$.
  \end{enumerate}
  In all cases one needs an initial condition with $p> \max\{\frac{2 \eta +1}{\gamma}, \frac{4+2 \varepsilon}{\varepsilon}\}$ finite moments.
\end{remark}

The next theorem presents the existence of weak solutions to the distribution-dependent SVEs.

\begin{theorem}\label{thm: weak solution to mean-field SVEs}
  Suppose Assumption~\ref{ass:volatility} and~\ref{ass:kernel}. Then, there exists a weak solution $(X_t)_{t\in [0,T]}$ to the distribution-dependent stochastic Volterra equation~\eqref{eq:MVSVE}.
\end{theorem}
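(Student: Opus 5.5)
We construct a solution to the distribution-dependent Volterra local martingale problem of Definition~\ref{def: MP for stoch process} and then invoke Proposition~\ref{prop:equivalence}. The approximation is an Euler-type scheme in which both the state and the law are frozen on a grid. For $n\in\N$ let $t_k^n:=kT/n$ and $\kappa_n(s):=t_k^n$ for $s\in[t_k^n,t_{k+1}^n)$. On a fixed probability space carrying $X_0\sim\mu_0$ and an independent Brownian motion $B$, define
\begin{equation*}
  X^n_t := X_0+\int_0^t K_b(s,t)\, b\big(s,X^n_{\kappa_n(s)},\L(X^n_{\kappa_n(s)})\big)\dd s+\int_0^t K_\sigma(s,t)\,\sigma\big(s,X^n_{\kappa_n(s)},\L(X^n_{\kappa_n(s)})\big)\dd B_s .
\end{equation*}
This is explicit: on $[t_k^n,t_{k+1}^n)$ the integrands depend only on $X^n_{t_0^n},\dots,X^n_{t_k^n}$ and their (deterministic) laws, so the scheme is well defined. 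Alongside it we use the semimartingale
\begin{equation*}
  Z^n_t:=X_0+\int_0^t b\big(s,X^n_{\kappa_n(s)},\L(X^n_{\kappa_n(s)})\big)\dd s+\int_0^t \sigma\big(s,X^n_{\kappa_n(s)},\L(X^n_{\kappa_n(s)})\big)\dd B_s .
\end{equation*}

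\textbf{Uniform moment and increment bounds.} The first step is to show $\sup_n\sup_{t\le T}\bE|X^n_t|^p<\infty$. We use Burkholder--Davis--Gundy and H\"older with exponents $(2+\epsilon)/2$ and its conjugate in the stochastic integral, and $1+\epsilon$ with its conjugate in the drift. Together with the linear growth assumption and $W_\eta(\delta_0,\L(Y))\le (\bE|Y|^p)^{1/p}$, this gives the Volterra-type inequality
\begin{equation*}
  \bE|X^n_t|^p\le C\Big(1+\int_0^t \sup_{r\le s}\bE|X^n_r|^p\dd s\Big).
\end{equation*}
The exponent $q=p(2+\epsilon)/(p-2-\epsilon)$ is finite because $p>(4+2\epsilon)/\epsilon$. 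Gronwall applied to $\sup_{r\le t}\bE|X^n_r|^p$ then yields the uniform bound.

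Next, splitting $X^n_{t'}-X^n_t$ into the integrals over $[0,t]$ with kernel differences and the integrals over $[t,t']$, Assumption~\ref{ass:kernel} gives
\begin{equation*}
  \bE|X^n_{t'}-X^n_t|^p\le C|t'-t|^{\gamma p}.
\end{equation*}
Since $\gamma p>1$, Kolmogorov's criterion provides continuous versions and tightness of $(X^n)$ in $C([0,T];\R^d)$. Tightness of $(Z^n)$ follows the same way (kernel $\equiv 1$), and tightness of the martingale parts $M^n$ and finite-variation parts $A^n$ follows likewise.

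\textbf{Limit passage.} By Prokhorov and Skorokhod we obtain, on a new probability space, copies $(\tilde X^n,\tilde Z^n,\tilde A^n,\tilde M^n)$ converging almost surely uniformly to $(X,Z,A,M)$. Uniform integrability of $p$-th moments upgrades this to $W_\eta$-convergence of $\L(\tilde X^n_s)\to \L(X_s)$ uniformly in $s$, because $p>\eta$ and the laws are equicontinuous in $W_\eta$. From the increment bound, $\bE|X^n_{\kappa_n(s)}-X^n_s|^p\le C n^{-\gamma p}$, so the frozen arguments converge to $(X_s,\L(X_s))$. The uniform continuity in Assumption~\ref{ass:volatility}, applied on compacts after localization, then gives convergence of the coefficients in $L^1(\Omega\times[0,T])$.

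For $f\in C_0^2(\R^d)$, the process $f(\tilde Z^n_t)-\int_0^t\mathcal{A}^f(s,\tilde X^n_{\kappa_n(s)},\L(\tilde X^n_{\kappa_n(s)}),\tilde Z^n_s)\dd s$ is a martingale for $\tilde Z^n$ by It\^o's formula. It is bounded in $L^2$ uniformly in $n$, so the martingale identity $\bE[(\mathcal M_t-\mathcal M_s)\Phi]=0$, tested against bounded continuous functionals $\Phi$ of the past, passes to the limit. This gives condition (iii) of Definition~\ref{def: MP for stoch process} with respect to the filtration generated by $(X,Z)$, augmented to satisfy the usual conditions.

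\textbf{Main obstacle: the Volterra link \eqref{eq:X_MP}.} It remains to show $X_t=Z_0+\int_0^tK_b(s,t)\dd A_s+\int_0^tK_\sigma(s,t)\dd M_s$, where the limits must satisfy $A=\int b\dd s$ and $M$ is the martingale part of $Z$. The difficulty is passing to the limit in $\int_0^t K_\sigma(s,t)\dd\tilde M^n_s$ for singular, non-convolution kernels, which is not a continuous functional of paths. I would approximate $K_\sigma(\cdot,t)$ in $L^{2}$ by step functions $K^{(j)}$, for which the integral is a finite sum of increments and hence continuous in the path. The error is controlled uniformly in $n$ by BDG and the uniform moment bounds on $\sigma$, which gives $\bE|\int_0^t(K_\sigma-K^{(j)})(s,t)\dd \tilde M^n_s|^2\le C\|K_\sigma(\cdot,t)-K^{(j)}\|_{L^{2+\epsilon}}^2$. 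The drift term is handled analogously. This is the convergence result for Volterra-type integrals to be placed in the appendix. Combined with almost sure convergence this identifies $X$ for each fixed $t$, and continuity gives the identity for all $t$, completing (iii). Proposition~\ref{prop:equivalence} then yields the weak solution.
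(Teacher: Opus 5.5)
Your proposal is correct. It shares the paper's overall architecture: the frozen-law Euler scheme, uniform $L^p$ and increment bounds, Kolmogorov--Prokhorov--Skorokhod, verification of the Volterra local martingale problem, and then Proposition~\ref{prop:equivalence}. The identification steps, however, go a different way.

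\textbf{How the paper identifies the limit.} It transfers the Volterra identity to the Skorokhod copies via pathwise uniqueness of the frozen equations and Kurtz's Yamada--Watanabe theorem. It gets the local martingale property of the limit from Jacod--Shiryaev, plus a martingale representation that produces $\tilde B$. It proves path-space $W_\eta$-convergence via Garsia--Rodemich--Rumsey. It handles $\int_0^tK_\sigma(s,t)\dd\hat M^k_s$ through $\mathcal H^2$-convergence (Vitali) and the It\^o isometry applied to $\hat M^k-\tilde M$.

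\textbf{What your step-function approximation buys.} Approximating $K_\sigma(\cdot,t)$ by step functions does both the transfer and the limit passage at once. The bound
\[
\bE\Big|X^n_t-X_0-\int_0^tK_b(s,t)\dd A^n_s-\int_0^tK^{(j)}(s)\dd M^n_s\Big|^2\le C\|K_\sigma(\cdot,t)-K^{(j)}\|_{L^2([0,t])}^2
\]
follows from Fubini and $\sup_{n,s}\bE|\sigma(s,X^n_{\kappa_n(s)},\L(X^n_{\kappa_n(s)}))|^2<\infty$, so $L^2$-approximation is already enough. The bound is a statement about the joint law, so it holds on the copies and survives the limit by Fatou. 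In the limit, $\int_0^tK^{(j)}\dd M\to\int_0^tK_\sigma(s,t)\dd M_s$ by the It\^o isometry for the limit martingale alone.

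This sidesteps two points the paper's route glosses over. First, the It\^o isometry on $\hat M^k-\tilde M$ needs $\hat M^k$ and $\tilde M$ to be martingales for a common filtration. Second, $\langle\hat M^k-\tilde M\rangle_T\to0$ alone does not control $\bE\int_0^tK_\sigma(s,t)^2\dd\langle\hat M^k-\tilde M\rangle_s$ for a singular kernel. Likewise, your test-function limit for the martingale property is legitimate, since $p>(2\eta+1)/\gamma\ge 6$ gives uniform $L^2$ bounds. It and your pointwise uniform-integrability plus $W_\eta$-equicontinuity argument for the marginals are more elementary than the paper's tools.

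\textbf{Points to fix.}
\begin{itemize}
\item State explicitly that the identity $\tilde X^n_t=X_0+\int_0^tK_b\dd\tilde A^n+\int_0^tK_\sigma\dd\tilde M^n$ on the copies is obtained through this device.
\item After pairing $K_\sigma(\cdot,t)\in L^{2+\epsilon}$ in H\"older, the exponent on $\sigma$ is $(4+2\epsilon)/\epsilon$. The condition $p>(4+2\epsilon)/\epsilon$ is exactly what makes the subsequent Jensen step work; your $q$ is garbled.
\item The kernel $\equiv1$ violates Assumption~\ref{ass:kernel} when $\gamma(2+\epsilon)>1$. So estimate $M^n$ directly by BDG with rate $|t'-t|^{p/2}$, as the paper does.
\end{itemize}
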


\begin{proof}
  Theorem~\ref{thm: weak solution to mean-field SVEs} is an immediate consequence of Proposition~\ref{prop:equivalence} and Lemma~\ref{lem:solution to Volterra martingale problem} below, which establishes the existence of a solution to distribution-dependent Volterra local martingale problem given $(\mu_0, b,\sigma,K_b,K_\sigma)$. Note that Proposition~\ref{prop:equivalence} is applicable by Remark~\ref{rem:asskernel_to_assshort}.
\end{proof}

The remainder of this section is devoted to establish the existence of a solution to the Volterra martingale problem given $(\mu_0, b,\sigma,K_b,K_\sigma)$, see Lemma~\ref{lem:solution to Volterra martingale problem} below, and, consequently, the existence of a weak solution to distribution-dependent SVE~\eqref{eq:MVSVE}. For this purpose, we consider an Euler-type approximation of the distribution-dependent SVE~\eqref{eq:MVSVE}.

\medskip

Let $(\pi_m = \{t_0^m=0, \ldots, t_{N(m)}^m=T\})_{m \in \bN}$ be a sequence of partitions with $\lim_{m \to \infty} |\pi_m|=0$, where $|\pi_m| := \sup_{i\leq N(m)} t^m_i - t^m_{i-1}$ for every $m \in \bN$ is the mesh size. We introduce, for every $m \in \bN$, the function $\kappa_m \colon [0,T] \to [0,T]$ by
\begin{equation*}
  \kappa_m(T) := T
  \quad \text{and} \quad
  \kappa_m(t) := t_i^m \text{ for } t_i^m \leq t < t_{i+1}^m, ~i=0,1,\ldots,N(m)-1.
\end{equation*}
For every $m \in \bN$, we iteratively define the process $(X^m(t))_{t \in [0,T]}$ by $X^m(0):=X_0$ and for $t \in (t_i^m,t_{i+1}^m]$, $i=0,1,\ldots,N(m)-1$, by
\begin{align}\label{eq:Euler_approx_process}
  \begin{split}
  X^m(t) &:= X_0 + \int_0^{t_i^m} K_b(s,t) b\big(s, X^m(\kappa_m(s)), \L(X^m(\kappa_m(s)))\big) \dd s \\
  & \qquad + \int_{t_i^m}^{t} K_b(s,t) b\big(s, X^m(t_i^m), \L(X^m(t_i^m))\big) \dd s\\
  & \qquad + \int_0^{t_i^m} K_\sigma(s,t) \sigma\big(s, X^m(\kappa_m(s)), \L(X^m(\kappa_m(s)))\big) \dd B_s \\
  & \qquad + \int_{t_i^m}^{t} K_\sigma(s,t) \sigma\big(s, X^m(t_i^m), \L(X^m(t_i^m))\big) \dd B_s.
  \end{split}
\end{align}

Moreover, we use the notation $A_{\xi}\lesssim B_{\xi}$ for a generic parameter~$\xi$, meaning that $A_{\xi}\le CB_{\xi}$ for some constant $C>0$ independent of~$\eta$. With $\mathcal{H}^2$ we denote the space of square integrable continuous martingales $(M_t)_{t\in [0,T]}$ equipped with the norm $\|M\|_{\mathcal{H}^2} := \bE[\sup_{t\in [0,T]} M_t^2]^{1/2}$ and we write $\xrightarrow{\bP}$ for convergence in probability.

\begin{remark}\label{rmk:BDG inequality}
  In general, the stochastic process
  \begin{align*}
    \Big( \int_0^t K_\sigma(s,t)\sigma(s,X_s) \dd B_s\Big)_{t \in [0,T]}
  \end{align*}
  is not a local martingale, assuming sufficient regularity to ensure its well-posedness like Assumption~\ref{ass:kernel} and~\ref{ass:volatility}. Yet, fixing the second argument in the kernel, the stochastic process
  \begin{align*}
    \Big( \int_0^t K_\sigma(s,\tilde{t})\sigma(s,X_s) \dd B_s\Big)_{t \in [0,\tilde{t}]}
  \end{align*}
  is a local martingale for every $\tilde{t} \in (0,T]$. Hence, we can apply the classical Burkholder--Davis--Gundy inequality to this martingale to obtain
  \begin{align*}
    \bE \Big[ \sup_{t \in [0,\tilde{t}]} \Big| \int_0^t K_\sigma(s,\tilde{t}) \sigma(s,X_s)\dd B_s\Big|^p \Big] \leq C_p \bE\Big[ \Big( \int_0^{\tilde{t}} (K_\sigma(s,\tilde{t}) \sigma (s,X_s))^2 \dd s \Big)^{p/2} \Big], \quad \tilde{t} \in [0,T],
  \end{align*}
  for some positive $C_p$ and, in particular,
  \begin{align}\label{eq:BDG}
    \bE \Big[ \Big| \int_0^{\tilde{t}} K_\sigma(s,\tilde{t}) \sigma(s,X_s)\dd B_s \Big|^p \Big] \leq C_p \bE\Big[ \Big( \int_0^{\tilde{t}} (K_\sigma(s,\tilde{t}) \sigma (s,X_s))^2 \dd s \Big)^{p/2} \Big]
  \end{align}
for every $\tilde{t} \in [0,T]$. In the following, we often refer to the Burkholder--Davis--Gundy inequality meaning that we use \eqref{eq:BDG}.
\end{remark}

As first step, we derive that the Euler-type approximation, defined in \eqref{eq:Euler_approx_process}, is (uniformly) bounded in $L^p$-spaces.

\begin{lemma}\label{lemma:boundedness}
  Suppose Assumption~\ref{ass:volatility} and~\ref{ass:kernel}. Then, for every $q \in [1,p]$ there is a $C_q>0$ such that $\bE[|X^m(t)|^q] \leq C_q$ holds for all $m \in \bN$ and $t \in [0,T]$. In particular, $X^m \in L^q(\Omega \times [0,T])$ holds for every $m \in \bN$ and $q \in [1,p]$.
\end{lemma}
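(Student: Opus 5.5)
We will prove the bound for $q=p$ only; the case $q\in[1,p)$ then follows from Jensen's inequality, $\bE[|X^m(t)|^q]\le \bE[|X^m(t)|^p]^{q/p}\le 1+\bE[|X^m(t)|^p]$. The key observation is that the scheme \eqref{eq:Euler_approx_process} can be written compactly as
\begin{equation*}
  X^m(t)=X_0+\int_0^t K_b(s,t)\, b\big(s,X^m(\kappa_m(s)),\L(X^m(\kappa_m(s)))\big)\dd s+\int_0^t K_\sigma(s,t)\,\sigma\big(s,X^m(\kappa_m(s)),\L(X^m(\kappa_m(s)))\big)\dd B_s ,
\end{equation*}
since $\kappa_m(s)=t_i^m$ for $s\in[t_i^m,t)$. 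The integrands are frozen at grid points, so $X^m(t)$ is built recursively from finitely many random variables $X^m(t_0^m),\dots,X^m(t_i^m)$. We first check by induction over $i$ that $\sup_{t\in[0,t_{i+1}^m]}\bE[|X^m(t)|^p]<\infty$ for each fixed $m$. This is a qualitative finiteness needed to justify a Gronwall-type argument. The induction step uses the linear growth bound, $W_\eta(\delta_0,\L(Y))=\bE[|Y|^\eta]^{1/\eta}\le \bE[|Y|^p]^{1/p}$ (as $p\ge\eta$), Hölder's inequality, and \eqref{eq:BDG}, together with $K_b(\cdot,t)\in L^{1+\epsilon}$ and $K_\sigma(\cdot,t)\in L^{2+\epsilon}$ from Assumption~\ref{ass:kernel}.

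For the uniform estimate, set $\phi_m(t):=\sup_{r\le t}\bE[|X^m(r)|^p]$. Using $|a+b+c|^p\lesssim |a|^p+|b|^p+|c|^p$ we bound the drift term by Hölder with exponents $1+\epsilon$ and its conjugate $(1+\epsilon)/\epsilon$:
\begin{equation*}
  \Big|\int_0^t K_b(s,t)\,b\,\dd s\Big|^p\le \Big(\int_0^t|K_b(s,t)|^{1+\epsilon}\dd s\Big)^{\frac{p}{1+\epsilon}}\Big(\int_0^t |b|^{\frac{1+\epsilon}{\epsilon}}\dd s\Big)^{\frac{p\epsilon}{1+\epsilon}} ,
\end{equation*}
and then Jensen in $s$ (since $p\epsilon/(1+\epsilon)\ge1$ for $p$ large, guaranteed by $p>(4+2\epsilon)/\epsilon$) to get $\lesssim L^{p/(1+\epsilon)}\int_0^t |b(s,\dots)|^{p}\dd s$. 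For the stochastic term we apply \eqref{eq:BDG} and then Hölder on $\int_0^t K_\sigma^2\sigma^2\dd s$ with exponents $(2+\epsilon)/2$ and $(2+\epsilon)/\epsilon$. The condition $p>(4+2\epsilon)/\epsilon$, i.e.\ $p\epsilon/(2+\epsilon)>2$, allows Jensen again, giving $\lesssim L^{p/(2+\epsilon)}\int_0^t\bE[|\sigma(s,\dots)|^p]\dd s$. Linear growth then yields
\begin{equation*}
  \bE\big[|b|^p+|\sigma|^p\big](s)\lesssim 1+\bE[|X^m(\kappa_m(s))|^p]+\bE[|X^m(\kappa_m(s))|^\eta]^{p/\eta}\lesssim 1+\phi_m(s).
\end{equation*}
Hence $\phi_m(t)\le C\big(1+\bE|X_0|^p+\int_0^t\phi_m(s)\dd s\big)$ with $C$ depending only on $p,\epsilon,L,T$ and the constant of Assumption~\ref{ass:volatility}, not on $m$. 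Since $\phi_m$ is finite by the first step, Gronwall's lemma gives $\phi_m(T)\le C(1+\bE|X_0|^p)e^{CT}=:C_p$ uniformly in $m$. Measurability and the statement $X^m\in L^q(\Omega\times[0,T])$ then follow by Fubini. For this we also need joint measurability of $(t,\omega)\mapsto X^m(t)$, which we obtain from the (a.s.\ continuous) version constructed via the kernel regularity in Assumption~\ref{ass:kernel}, or simply by choosing a measurable version.

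The main obstacle is the exponent bookkeeping in the Hölder/Jensen step. One has to make sure the kernel integrability exponents $1+\epsilon$ and $2+\epsilon$ leave conjugate exponents no larger than $p$, so that the moments on the right side are the same $p$-th moments as on the left. The hypothesis $p>(4+2\epsilon)/\epsilon$ is exactly what makes this close. If the Jensen step fails for the drift with small $p$, we would instead use $|K_b|\in L^1$ directly, writing $|K_b|\,\dd s$ as a finite measure and applying Jensen with respect to it. A secondary subtlety is the a priori finiteness needed for Gronwall, which the grid-wise induction handles.
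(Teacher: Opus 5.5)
Your proposal is correct and follows essentially the same route as the paper. The common steps are the compact rewriting of the scheme with $\kappa_m$, BDG in the form \eqref{eq:BDG}, H\"older against the kernel integrability from Assumption~\ref{ass:kernel}, linear growth together with $W_\eta(\delta_0,\L(Y))^p\le\bE[|Y|^p]$, and Gronwall. In the H\"older step you use exponents $1+\epsilon$ and $2+\epsilon$ followed by Jensen, whereas the paper uses $p'=p/(p-1)$ and $2\tilde p$ with $\tilde p=p/(p-2)$ directly; both are equivalent under $p>(4+2\epsilon)/\epsilon$.

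The only real difference is in the Gronwall step. The paper applies a discrete Gronwall lemma to the grid values $\bE[|X^m(t_i^m)|^p]$ and then reinserts that bound for general $t$, which makes finiteness automatic. Your continuous Gronwall on the running supremum $\phi_m$ needs a separate grid-wise finiteness step, and you correctly supply it.
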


\begin{proof}
  Fix some $m \in \bN$ and define $\tilde{p}:=\frac{p}{p-2}\leq 1+ \frac{\epsilon}{2}$, $p':= \frac{p}{p-1}\leq 1+ \epsilon$. For $t \in (t_i^m, t_{i+1}^m]$, $i=1, \ldots, N(m)-1$, we iteratively get
  \begin{align*}
    \bE[|X^m(t)|^p] & = \bE\Big[\Big|X_0 + \int_0^t K_b(s,t) b \big(s,X^m(\kappa_m(s)),\L(X^m(\kappa_m(s)))\big) \dd s\\
    & \qquad + \int_0^t K_\sigma(s,t) \sigma\big(s,X^m(\kappa_m(s)),\L(X^m(\kappa_m(s)))\big) \dd B_s\Big|^p\Big]\\
    & \leq C_p \Big(\bE[|X_0|^p] + \bE\Big[\Big|\int_0^t K_b(s,t) b\big(s,X^m(\kappa_m(s)),\L(X^m(\kappa_m(s)))\big) \dd s\Big|^p\Big]\\
    & \qquad + \bE\Big[\Big|\int_0^t K_\sigma(s,t) \sigma\big(s,X^m(\kappa_m(s)),\L(X^m(\kappa_m(s)))\big) \dd B_s\Big|^p\Big]\Big)\\
    & \leq C_p \Big(\bE[|X_0|^p] + \bE\Big[\Big|\int_0^t K_b(s,t) b\big(s,X^m(\kappa_m(s)),\L(X^m(\kappa_m(s)))\big) \dd s\Big|^p\Big]\\
    & \qquad + \bE\Big[\Big(\int_0^t |K_\sigma(s,t) \sigma\big(s,X^m(\kappa_m(s)),\L(X^m(\kappa_m(s)))\big)|^2 \dd s\Big)^{p/2}\Big]\Big)\\
    & \leq C_{p,T} \Big(\bE[|X_0|^p] \\
    & \qquad +\Big(\int_0^t |K_b(s,t)|^{p'} \dd s\Big)^\frac{p}{p'} \int_0^t \bE[|b\big(s,X^m(\kappa_m(s)),\L(X^m(\kappa_m(s)))\big)|^p]\dd s\\
    & \qquad + \Big(\int_0^t |K_\sigma(s,t)|^{2 \tilde{p}} \dd s\Big)^\frac{p}{2 \tilde{p}} \int_0^t \bE[|\sigma\big(s,X^m(\kappa_m(s)),\L(X^m(\kappa_m(s)))\big)|^p]\dd s\Big),
  \end{align*}
  where we used the Burkholder--Davis--Gundy inequality (cf. Remark~\ref{rmk:BDG inequality}) in the penultimate inequality and the H{\"o}lder inequality in the last step.

  Recall that $p \geq \eta$ and note that
  \begin{equation*}
    W_\eta(\delta_0,\L(X^m(\kappa_m(s))))^p \leq \bE[|X^m(\kappa_m(s))|^\eta]^{\frac{p}{\eta}} \leq \bE[|X^m(\kappa_m(s))|^p].
  \end{equation*}
  Using the linear growth assumption in Assumption~\ref{ass:volatility}, the above fact and Assumption~\ref{ass:kernel}, we have
  \begin{align}\label{eq:boundedness_1}
    \bE[|X^m(t)|^p] & \leq C_{p,T} \Big(\bE[|X_0|^p] + C_{K,b,\sigma} \int_0^t 1 + 2 \bE[|X^m(\kappa_m(s))|^p] \dd s \Big).
  \end{align}
  Then, we have
  \begin{align*}
    \bE[|X^m(t_i^m)|^p] & \leq C_{p,T} \Big(\bE[|X_0|^p] + C_{K,b,\sigma} \int_0^{t_i^m} 1 + 2 \bE[|X^m(\kappa_m(s))|^p] \dd s \Big)\\
    & = C_{p,T} \bE[|X_0|^p] + C_{p,T,K,b,\sigma} \sum_{j=0}^{i-1} (t_{j+1}^m - t_j^m) ( 1 + 2 \bE[|X^m(t_j^m)|^p])\\
    & \leq C_{p,T} \bE[|X_0|^p] + C_{p,T,K,b,\sigma} T + \sum_{j=0}^{i-1} 2C_{p,T,K,b,\sigma} (t_{j+1}^m - t_j^m) \bE[|X^m(t_j^m)|^p].
  \end{align*}
  Applying the discrete Gronwall lemma (see, e.g., \cite[Lemma 1.4.2]{Quarteroni1994}), we get
  \begin{align}\label{eq:boundedness_2}
    \begin{split}
    \bE[|X^m(t_i^m)|^p] & \leq (C_{p,T} \bE[|X_0|^p] + C_{p,T,K,b,\sigma} T) \exp\Big(\sum_{j=0}^{i-1} 2C_{p,T,K,b,\sigma} (t_{j+1}^m - t_j^m)\Big)\\
    & = (C_{p,T} \bE[|X_0|^p] + C_{p,T,K,b,\sigma} T) \exp( 2C_{p,T,K,b,\sigma} t_i^m).
    \end{split}
  \end{align}
  Then, for $t \in [0,T]$, by inserting \eqref{eq:boundedness_2} into \eqref{eq:boundedness_1}, we obtain
  \begin{align*}
    &\bE[|X^m(t)|^p] \\
    &\quad \leq C_{p,T} \Big(\bE[|X_0|^p] + C_{K,b,\sigma} \int_0^t 1 + 2 (C_{p,T} \bE[|X_0|^p] + C_{p,T,K,b,\sigma} T) \exp( 2C_{p,T,K,b,\sigma} \kappa_m(s)) \dd s\Big)\\
    &\quad \leq C_{p,T} \Big(\bE[|X_0|^p] + C_{K,b,\sigma}t+C_{K,b,\sigma} \int_0^t  2 (C_{p,T} \bE[|X_0|^p] + C_{p,T,K,b,\sigma} T) \exp( 2C_{p,T,K,b,\sigma} s) \dd s \Big)\\
    &\quad = C_{p,T} \Big(\bE[|X_0|^p] + C_{K,b,\sigma}t+C_{K,b,\sigma} \Big(C_{p,T,K,b,\sigma} \bE[|X_0|^p] +  T\Big) (\exp( 2C_{p,T,K,b,\sigma} t)-1)\Big)\\
    &\quad=: g_p(t),
  \end{align*}
  where $g_p$ is a continuous function independent of $m$. Using Fubini theorem, we observe that
  \begin{equation*}
    \bE\Big[\int_0^T |X^m(t)|^p \dd t\Big] \leq \int_0^T g_p(t)\dd t \leq T \sup_{t \in [0,T]} g_p(t).
  \end{equation*}
  By the orderedness of $L^p$-spaces the general statement follows.
\end{proof}

As second step, we establish the tightness of the Euler-type approximation~\eqref{eq:Euler_approx_process} and, thus, the existence of a converging subsequence. To that end, for $t\in [0,T]$, the marginal distribution~$\mu_t$ at time $t$ of the probability measure~$\mu$ on $C([0,T];\R^d)$ is defined as the pushforward measure, i.e.
\begin{equation*}
  \mu_t(A) := \mu(\{x \in C([0,T]; \bR^d) \colon x_t \in A\}), \quad t\in [0,T],
\end{equation*}
for any measurable set $A \subset \bR^d$.

\begin{remark}\label{remark_Wasserstein}
  Note that, for all $\mu, \nu \in \mathcal{P}_\eta(C([0,T]; \bR^d))$ and $t \in [0,T]$,
  \begin{align*}
    W_\eta(\mu_t,\nu_t)^\eta & = \inf_{\pi \in \Pi(\mu_t,\nu_t)} \int_{(\bR^d)^{\otimes 2}} |x-y|^\eta \dd \pi(x,y)\\
    & = \inf_{\pi \in \Pi(\mu,\nu)} \int_{(\bR^d)^{\otimes 2}} |x-y|^\eta \dd \pi_t(x,y)\\
    & = \inf_{\pi \in \Pi(\mu,\nu)} \int_{C([0,T];\bR^d)^{\otimes 2}} |x_t-y_t|^\eta \dd \pi(x,y)\\
    & \leq \inf_{\pi \in \Pi(\mu,\nu)}  \int_{C([0,T];\bR^d)^{\otimes 2}} \sup_{t \in [0,T]}|x_t-y_t|^\eta \dd \pi(x,y)\\
    & = W_\eta(\mu,\nu)^\eta.
  \end{align*}
\end{remark}

We introduce the sequences $(A^m)_{m\in\N}$ and $(M^m)_{m\in\N}$ by 
\begin{equation}\label{eq:Am}
  A^m_t:= X^m(0)+\int_0^t b\big(s,X^m(\kappa_m(s)), \L(X^m(\kappa_m(s)))\big)\dd s,
\end{equation}
and
\begin{equation}\label{eq:Mm}
  M^m_t:=\int_0^t \sigma\big(s,X^m(\kappa_m(s)), \L(X^m(\kappa_m(s)))\big)\dd B_s, \qquad t\in[0,T].
\end{equation}
In the following, we denote $X\stackrel{\mathscr{D}}{\sim}Y$ for equality in law of stochastic processes $X$ and $Y$.

\begin{lemma}\label{lem:limitprocesses}
  Suppose Assumption~\ref{ass:volatility} and~\ref{ass:kernel}. Let $(A^m)_{m \in \bN}$, $(M^m)_{m \in \bN}$ and $(X^m)_{m \in \bN}$ be as in \eqref{eq:Am}, \eqref{eq:Mm} and \eqref{eq:Euler_approx_process}, respectively. Then, there is a subsequence $(X^{m_k},A^{m_k},M^{m_k})_{k \in \bN}$ of $(X^m,A^m,M^m)_{m \in \bN}$ and a probability space $(\tilde{\Omega}, \tilde{\mathcal{F}},\tilde{\bP})$ with stochastic processes $\hat{X}^k,\hat{A}^k,\hat{M}^k$, $k \in \bN$, $\tilde{X}, \tilde{A}, \tilde{M}$ and a Brownian motion $(\tilde{B}_t)_{t\in [0,T]}$ on it with $(\hat{X}^k, \hat{A}^k, \hat{M}^k) \stackrel{\mathscr{D}}{\sim} (X^{m_k},A^{m_k},M^{m_k})$, $k \in \bN$, such that $(\hat{X}^k, \hat{A}^k, \hat{M}^k) \to (\tilde{X},\tilde{A},\tilde{M})$ in $C([0,T]; \bR^d \times \bR^d \times \bR^d)$ as $k \to \infty$ $\tilde{\bP}$-a.s., where $\tilde{M}$ is a local martingale with representation
  \begin{equation*}
    \tilde{M}_t=\int_0^t \sigma(s,\tilde{X}_s, \L(\tilde{X}_s)) \dd \tilde{B}_s, \qquad t \in [0,T].
  \end{equation*}
\end{lemma}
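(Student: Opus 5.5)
Tightness of $(X^m,A^m,M^m)_{m\in\bN}$ in $C([0,T];\bR^{3d})$ comes from Kolmogorov's criterion together with the uniform moment bounds of Lemma~\ref{lemma:boundedness}. After that I apply Prokhorov and Skorokhod and identify the limit of the martingale part.

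\textbf{Increment bounds.} First note that by construction $X^m(t)=X_0+\int_0^t K_b(s,t)b(s,X^m(\kappa_m(s)),\L(X^m(\kappa_m(s))))\dd s+\int_0^t K_\sigma(s,t)\sigma(\cdots)\dd B_s$ for all $t$. For $(t,t')\in\Delta_T$, I write $X^m(t')-X^m(t)$ as the sum of four terms:
\begin{itemize}
  \item the kernel differences $\int_0^t (K_\cdot(s,t')-K_\cdot(s,t))(\cdots)$ for the drift and the diffusion;
  \item the new pieces $\int_t^{t'}K_\cdot(s,t')(\cdots)$ for the drift and the diffusion.
\end{itemize}
The four terms are treated as follows.
\begin{itemize}
  \item The diffusion terms are handled by the BDG inequality \eqref{eq:BDG} with the upper time point frozen.
  \item H\"older is then applied with exponents $\frac{2+\epsilon}{2}$ and its conjugate $\tilde p$; this is admissible since $p>\frac{4+2\epsilon}{\epsilon}$ gives $2\tilde p\le 2+\epsilon$.
  \item The drift terms use H\"older with $p'\le 1+\epsilon$.
  \item The coefficients are controlled by linear growth, $W_\eta(\delta_0,\L(Y))\le \bE[|Y|^p]^{1/p}$, and Lemma~\ref{lemma:boundedness}.
\end{itemize}
Assumption~\ref{ass:kernel} then yields $\bE|X^m(t')-X^m(t)|^p\lesssim |t'-t|^{\gamma p}$ uniformly in $m$. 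Since $\gamma p>1$, Kolmogorov gives tightness of $(X^m)$. For $A^m$ and $M^m$ the estimates are easier: $\bE|A^m_{t'}-A^m_t|^p\lesssim |t'-t|^p$ and $\bE|M^m_{t'}-M^m_t|^p\lesssim |t'-t|^{p/2}$ with $p/2>1$. Tightness of the triple follows componentwise. Prokhorov gives a weakly convergent subsequence, and Skorokhod's representation theorem gives $(\hat X^k,\hat A^k,\hat M^k)$ on some $(\tilde\Omega,\tilde{\mathcal F},\tilde\bP)$ converging a.s.\ to $(\tilde X,\tilde A,\tilde M)$.

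\textbf{Martingale property of $\tilde M$.} Let $\tilde{\mathcal F}_t$ be the augmented filtration generated by $(\tilde X,\tilde A,\tilde M)$ up to time $t$. Each $\hat M^k$ is a martingale with respect to the filtration of $(\hat X^k,\hat A^k,\hat M^k)$ and has quadratic variation $\int_0^\cdot\sigma\sigma^\top(s,\hat X^k(\kappa_{m_k}(s)),\L(\hat X^k(\kappa_{m_k}(s))))\dd s$. This property transfers through equality in law, because it can be expressed via expectations $\bE[(\hat M^k_t-\hat M^k_s)\Phi]=0$ and $\bE[(\hat M^k_t\hat M^{k,\top}_t-\hat M^k_s\hat M^{k,\top}_s-\int_s^t\cdots)\Phi]=0$ for bounded continuous functionals $\Phi$ of the path up to time $s$.

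Next I pass to the limit $k\to\infty$ in these identities. Uniform integrability comes from the $L^p$ bounds, $p>2$. The main step is convergence of the coefficients, for which I argue as follows.
\begin{itemize}
  \item $\hat X^k(\kappa_{m_k}(s))\to\tilde X_s$ a.s.: this uses uniform convergence together with the equicontinuity supplied by the Kolmogorov H\"older modulus.
  \item $W_\eta(\L(\hat X^k(\kappa_{m_k}(s))),\L(\tilde X_s))\to 0$: convergence in probability plus uniformly bounded $p$-th moments with $p>\eta$ give convergence of $\eta$-th moments.
  \item Localizing on compact sets, the continuity in Assumption~\ref{ass:volatility}, which is uniform in $t$, then gives convergence of $\sigma\sigma^\top$ inside the integral.
\end{itemize}
Hence $\tilde M$ is a continuous martingale with $\langle\tilde M\rangle_t=\int_0^t\sigma\sigma^\top(s,\tilde X_s,\L(\tilde X_s))\dd s$.

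\textbf{Representation and main obstacle.} By the martingale representation theorem for absolutely continuous quadratic variation (e.g.\ \cite[Theorem~2.1.2]{Jacod2012}, or Karatzas--Shreve Thm.~3.4.2 on an enlarged space), there is a Brownian motion $\tilde B$ with $\tilde M=\int_0^\cdot\sigma(s,\tilde X_s,\L(\tilde X_s))\dd\tilde B_s$. If an enlargement is needed, it is a product extension, and I keep the same notation for the extended space. I expect the main obstacle to be the law-dependence in the coefficient-convergence step: one has to verify Wasserstein convergence of the time-$\kappa_{m_k}(s)$ marginals uniformly enough to pass the limit under the $\dd s$-integral. I would handle this with Remark~\ref{remark_Wasserstein}, the uniform H\"older moment bound in time, and dominated convergence in $s$.
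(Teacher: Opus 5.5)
Your proposal is correct, and its first half (the four-term decomposition, BDG with the upper time frozen, H\"older plus Lemma~\ref{lemma:boundedness}, then Kolmogorov, Prokhorov and Skorokhod) is the paper's argument. There is one small slip there. The pairing that yields $2\tilde p\le 2+\epsilon$ is H\"older with $p/2$ and $\tilde p=p/(p-2)$, not with $\frac{2+\epsilon}{2}$ and its conjugate. The paper uses the latter pairing and then Jensen with $\frac{p\epsilon}{4+2\epsilon}>1$; either works under $p>\frac{4+2\epsilon}{\epsilon}$.

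Where you genuinely differ is the identification of $\tilde M$. The paper includes $B$ in the tight family and uses Kurtz's Yamada--Watanabe theorem to obtain $\hat M^k=\int_0^\cdot \sigma(\dots)\dd \hat B^k_s$ on the Skorokhod space. It then proves path-space convergence $W_\eta(\L(\hat X^k),\L(\tilde X))\to 0$ via a Garsia--Rodemich--Rumsey bound on $\bE[\sup_t|\hat X^k_t|^\eta]$. Finally it appeals to Jacod--Shiryaev: Prop.~IX.1.17 for the local martingale property of the limit, and Cor.~VI.6.29 with Lemma~\ref{lemma:convergence_integrals} for its bracket, before applying the representation theorem.

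You instead transfer the martingale and bracket identities through equality in law using test functionals, and pass to the limit by uniform integrability ($p>2$). This needs only fixed-time convergence $W_\eta(\L(\hat X^k(\kappa_{m_k}(s))),\L(\tilde X_s))\to 0$, which you get cheaply from a.s.\ convergence and $\sup_k\bE|\cdot|^p<\infty$ with $p>\eta$, together with pathwise dominated convergence in $s$. So the uniformity in $s$ you flag as the main obstacle is not actually needed. Likewise, continuity of $\tilde X$ alone, without any equicontinuity, gives $\hat X^k(\kappa_{m_k}(s))\to\tilde X_s$.

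Your route is more elementary and bypasses the appendix lemma. It also makes explicit the filtration generated by $(\tilde X,\tilde A,\tilde M)$, with respect to which $\tilde M$ is a martingale and to which the integrand $\sigma(s,\tilde X_s,\L(\tilde X_s))$ is adapted. The representation theorem requires this, and the paper leaves it implicit. In exchange, the paper's route also delivers the Brownian motions $\hat B^k$ and the path-space Wasserstein convergence. It reuses both in the proof of Lemma~\ref{lem:solution to Volterra martingale problem}, so with your route these would have to be supplied there separately.
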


\begin{proof}
  We aim to apply the Kolmogorov tightness criterion (see \cite[Problem~2.4.11]{Karatzas1991}). By assumption there is a $C >0$ such that $\bE[|A^m(0)|^p]=\bE[|X^m(0)|^p] \leq C$ holds for all $m \in \bN$. Furthermore, $M^m(0)=0$ for all $m \in \bN$. Recall $p > \max\{\frac{1+2\eta}{\gamma}, 2 + \frac{4}{\epsilon}\}$. The laws of the stochastic processes are uniformly bounded by Lemma~\ref{lemma:boundedness}. Among others, by the linear growth condition this implies that there are $C_b$ and $C_\sigma$ such that
  \begin{align*}
    |\sigma(s,x,\L(X^m(t)))| &\leq C_\sigma(1+|x|)
    \quad \text{and}\quad
    |b(s,x,\L(X^m(t)))| \leq C_b(1+|x|)
  \end{align*}
  for all $m \in \bN$, $s,t \in [0,T]$, and $x \in \bR$, since
  \begin{equation*}
    |\sigma(s,x,\L(X^m(t)))| \leq C\big(1+|x|+W_\eta(\delta_0,\L(X^m(t)))\big) \leq C(1+|x|+\bE[|X_m(t)|^\eta]^{1/\eta})
  \end{equation*}
  and an analogous inequality holds for $b$.

  For $(t,t') \in \Delta_T$ we have
  \begin{align*}
    |X^m(t') - X^m(t)|^p & = \Big|\int_0^{t'} K_b(s,t') b\big(s, X^m(\kappa_m(s)), \L(X^m(\kappa_m(s)))\big) \dd s\\
    & \qquad -\int_0^t K_b(s,t) b\big(s, X^m(\kappa_m(s)), \L(X^m(\kappa_m(s)))\big) \dd s\\
    & \qquad + \int_0^{t'} K_\sigma(s,t') \sigma\big(s, X^m(\kappa_m(s)), \L(X^m(\kappa_m(s)))\big) \dd B_s\\
    & \qquad -\int_0^t K_\sigma(s,t) \sigma\big(s, X^m(\kappa_m(s)), \L(X^m(\kappa_m(s)))\big) \dd B_s \Big|^p\\
    &\leq C_p \Big(\Big|\int_0^{t} (K_b(s,t')-K_b(s,t)) b\big(s, X^m(\kappa_m(s)), \L(X^m(\kappa_m(s)))\big) \dd s\Big|^p \\
    & \qquad + \Big|\int_t^{t'} K_b(s,t') b\big(s, X^m(\kappa_m(s)), \L(X^m(\kappa_m(s)))\big) \dd s \Big|^p\\
    & \qquad + \Big|\int_0^{t} (K_\sigma(s,t')-K_\sigma(s,t)) \sigma\big(s, X^m(\kappa_m(s)), \L(X^m(\kappa_m(s)))\big) \dd B_s\Big|^p \\
    & \qquad + \Big|\int_t^{t'} K_\sigma(s,t') \sigma\big(s, X^m(\kappa_m(s)), \L(X^m(\kappa_m(s)))\big) \dd B_s \Big|^p\Big)\\
    & =: \mathbf{A}+\mathbf{B}+\mathbf{C}+\mathbf{D}.
  \end{align*}

  For $\mathbf{A}$, we obtain
  \begin{align*}
    \bE[\mathbf{A}] & \leq C_p \Big( \int_0^t |K_b(s,t')-K_b(s,t)|^{1+\epsilon} \dd s\Big)^{\frac{p}{1+\epsilon}}\\
    & \qquad \times \bE\Big[ \Big| \int_0^t |b\big(s, X^m(\kappa_m(s)), \L(X^m(\kappa_m(s)))\big)|^{\frac{1+\epsilon}{\epsilon}} \dd s \Big|^{\frac{p \epsilon}{1+\epsilon}}\Big]\\
    & \leq C_p (L(t'-t)^{\gamma(1+\epsilon)})^{\frac{p}{1+ \epsilon}} \bE \Big[ \Big| \int_0^t (C_b(1+|X^m(\kappa_m(s))|)^{\frac{1+\epsilon}{\epsilon}} \dd s \Big|^{\frac{p \epsilon}{1+\epsilon}}\Big]\\
    & \leq C_{p,L,\epsilon} (t'-t)^{\gamma p} \bE \Big[C_{b,p} \Big| \int_0^t (1+ |X^m(\kappa_m(s))|)^{\frac{1+\epsilon}{\epsilon}}  \dd s \Big|^{\frac{p \epsilon}{1+\epsilon}} \Big]\\
    & \leq C_{p,L,\epsilon} (t'-t)^{\gamma p} \bE \Big[C_{b,p,\epsilon,T} + C_{b,p,\epsilon} \Big| \int_0^t |X^m(\kappa_m(s))|^{\frac{1+\epsilon}{\epsilon}}  \dd s \Big|^{\frac{p \epsilon}{1+\epsilon}} \Big]\\
    & \leq C_{p,L,\epsilon} (t'-t)^{\gamma p} \Big(C_{b,p,\epsilon,T} + C_{b,p,\epsilon} \bE \Big[t^{\frac{p \epsilon}{1+\epsilon}-1} \int_0^t |X^m(\kappa_m(s))|^p  \dd s \Big]\Big)\\
    & \leq C_{p,L,\epsilon} (t'-t)^{\gamma p} \Big(C_{b,p,\epsilon,T} + C_{b,p,\epsilon} t^{\frac{p \epsilon}{1+\epsilon}-1} \int_0^t \sup_{r \in [0,T]} \bE [|X^m(\kappa_m(r))|^p ]  \dd s \Big)\\
    & \leq C_{p,L,\epsilon,T,b} (t'-t)^{\gamma p},
  \end{align*}
  where we used the linear growth as well as the kernel assumptions as well as $\frac{p \epsilon}{1+\epsilon}>1$ for the Jensen inequality and Lemma~\ref{lemma:boundedness} in the last step.

  For $\mathbf{B}$, we see that
  \begin{align*}
    \bE[\mathbf{B}] &= C_p \bE\Big[ \Big| \int_t^{t'}K(s,t') b\big(s,X^m(\kappa_m(s)),\L(X^m(\kappa_m(s)))\big) \dd s \Big|^p \Big]\\
    & \leq C_p \Big( \int_t^{t'} |K(s,t')|^{1+\epsilon} \dd s \Big)^\frac{p}{1+\epsilon} \bE \Big[ \Big| \int_t^{t'}|b\big(s,X^m(\kappa_m(s)),\L(X^m(\kappa_m(s)))\big)|^\frac{1+\epsilon}{\epsilon} \dd s \Big|^{\frac{p \epsilon}{1+\epsilon}}\Big]\\
    & \leq C_p (L(t'-t)^{\gamma(1+\epsilon)})^\frac{p}{1+ \epsilon} \bE\Big[ \Big| \int_0^{t'}(C_b(1+|X^m(\kappa_m(s))|))^{\frac{1+\epsilon}{\epsilon}} \dd s \Big|^{\frac{p \epsilon}{1+\epsilon}}\Big]
  \end{align*}
  and we end up in the same situation as above.

  For $\mathbf{C}$, using the Burkholder--Davis--Gundy inequality and H{\"o}lder inequality, we get
  \begin{align*}
    \bE[\mathbf{C}] & \leq C_p \bE \Big[\Big|\int_0^{t} (K_\sigma(s,t')-K_\sigma(s,t))^2 \sigma(s, X^m(\kappa_m(s)), \L(X^m(\kappa_m(s))))^2 \dd s\Big|^{\frac{p}{2}} \Big]\\
    & \leq C_p \Big( \int_0^t |K_\sigma(s,t')-K_\sigma(s,t)|^{2+\epsilon} \dd s \Big)^{\frac{p}{2+ \epsilon}} \\
    & \qquad \times\bE \Big[ \Big| \int_0^t |\sigma(s, X^m(\kappa_m(s)), \L(X^m(\kappa_m(s))))|^{\frac{4+2 \epsilon}{\epsilon}} \dd s \Big|^{\frac{p \epsilon}{4 + 2 \epsilon}} \Big].
  \end{align*}
  Using the linear growth condition and the kernel assumption, we observe that
  \begin{align*}
    \bE[\mathbf{C}] & \leq C_p (L(t'-t)^{\gamma(2+\epsilon)})^{\frac{p}{2+ \epsilon}} \bE \Big[ \Big| \int_0^t (C_{\sigma}(1+ |X^m(\kappa_m(s))|))^{\frac{4+2 \epsilon}{\epsilon}} \dd s \Big|^{\frac{p \epsilon}{4+2 \epsilon}} \Big]\\
    & \leq C_{p,L,\epsilon} (t'-t)^{\gamma p} \bE \Big[ \Big| \int_0^t C_{\sigma,\epsilon}(1+ |X^m(\kappa_m(s))|^{\frac{4+2 \epsilon}{\epsilon}}) \dd s \Big|^{\frac{p \epsilon}{4+2 \epsilon}} \Big]\\
    & \leq C_{p,L,\epsilon} (t'-t)^{\gamma p} \bE \Big[ \Big|  C_{\sigma,\epsilon}t+ C_{\sigma,\epsilon}\int_0^t|X^m(\kappa_m(s))|^{\frac{4+2 \epsilon}{\epsilon}} \dd s \Big|^{\frac{p \epsilon}{4+2 \epsilon}} \Big]\\
    & \leq C_{p,L,\epsilon} (t'-t)^{\gamma p} \Big( C_{\sigma,\epsilon,p,T}+ C_{\sigma,\epsilon,p} \bE \Big[ \Big|\int_0^t|X^m(\kappa_m(s))|^{\frac{4+2 \epsilon}{\epsilon}} \dd s \Big|^{\frac{p \epsilon}{4+2 \epsilon}} \Big] \Big)\\
    & \leq C_{p,L,\epsilon} (t'-t)^{\gamma p} \Big(   C_{\sigma,\epsilon,p,T}+ C_{\sigma,\epsilon,p} \bE \Big[ t^{\frac{p \epsilon}{4+2 \epsilon}-1} \int_0^t |X^m(\kappa_m(s))|^p \dd s \Big] \Big)\\
    & = C_{p,L,\epsilon} (t'-t)^{\gamma p} \Big(   C_{\sigma,\epsilon,p,T}+ C_{\sigma,\epsilon,p}  t^{\frac{p \epsilon}{4+2 \epsilon}-1} \int_0^t\bE[|X^m(\kappa_m(s))|^p]\dd s \Big)\\
    & \leq C_{p,L,\epsilon} (t'-t)^{\gamma p} \Big(   C_{\sigma,\epsilon,p,T}+ C_{\sigma,\epsilon,p}  t^{\frac{p \epsilon}{4+2 \epsilon}-1} \int_0^t\sup_{r \in [0,T]}\bE[|X^m(\kappa_m(r))|^p]\dd s \Big)\\
    & \leq C_{p,T,\epsilon,L,\sigma} (t'-t)^{\gamma p},
  \end{align*}
  where we used $\frac{p \epsilon}{4+2 \epsilon} > 1$ for the Jensen inequality in the fourth last step and Lemma~\ref{lemma:boundedness} in the last step.

  For $\mathbf{D}$, using the Burkholder--Davis--Gundy inequality and H{\"o}lder inequality, we see that
  \begin{align*}
    \bE[\mathbf{D}] & \leq C_p \bE \Big[\Big|\int_t^{t'} K_\sigma(s,t')^2 \sigma\big(s, X^m(\kappa_m(s)), \L(X^m(\kappa_m(s)))\big)^2 \dd s\Big|^{\frac{p}{2}} \Big]\\
    & \leq C_p \Big( \int_t^{t'} |K_\sigma(s,t')|^{2+\epsilon} \dd s \Big)^{\frac{p}{2+ \epsilon}} \bE \Big[ \Big| \int_t^{t'} |\sigma\big(s, X^m(\kappa_m(s)), \L(X^m(\kappa_m(s)))\big)|^{\frac{4+2 \epsilon}{\epsilon}} \dd s \Big|^{\frac{p \epsilon}{4 + 2 \epsilon}} \Big].
  \end{align*}

  Using the linear growth condition and the kernel assumption we get
  \begin{align*}
    \bE[\mathbf{D}] & \leq C_p (L(t'-t)^{\gamma(2+\epsilon)})^{\frac{p}{2+ \epsilon}} \bE \Big[ \Big| \int_t^{t'} (C_{\sigma}(1+ |X^m(\kappa_m(s))|))^{\frac{4+2 \epsilon}{\epsilon}} \dd s \Big|^{\frac{p \epsilon}{4+2 \epsilon}} \Big]\\
    & \leq C_p (L(t'-t)^{\gamma(2+\epsilon)})^{\frac{p}{2+ \epsilon}} \bE \Big[ \Big| \int_0^{t'} (C_{\sigma}(1+ |X^m(\kappa_m(s))|))^{\frac{4+2 \epsilon}{\epsilon}} \dd s \Big|^{\frac{p \epsilon}{4+2 \epsilon}} \Big],
  \end{align*}
  and again we end up in the same situation as above.

  Combining these estimates we have
  \begin{equation}\label{eq:tightness}
    \bE[|X^m(t') - X^m(t)|^p] \leq C (t'-t)^{\gamma p}
  \end{equation}
  for $t',t \in [0,T]$ with $\gamma p > 1$.

  For $A^m$, we obtain
  \begin{align*}
    \bE[|A^m(t')-A^m(t)|^p] & = \bE\Big[ \Big| \int_t^{t'} b\big(s,X^m(\kappa_m(s)),\L(X^m(\kappa_m(s)))\big) \dd s \Big|^p \Big]\\
    & \leq C_{p,b} \bE\Big[ \Big| \int_t^{t'} 1 + |X^m(\kappa_m(s))| \dd s |^p \Big]\\
    & \leq C_{p,b} \bE\Big[ (t'-t)^{p-1} \int_t^{t'} |1+X^m(\kappa_m(s))|^p \dd s \Big]\\
    & \lesssim C_{p,b} (t'-t)^{p-1} \int_t^{t'} 1+\bE[|X^m(\kappa_m(s))|^p \dd s]\\
    & \leq C_{p,b} (t'-t)^{p-1} \Big((t'-t) +  \int_t^{t'} \sup_{r \in [0,T]} \bE[|X^m(r)|^p] \dd s \Big)\\
    & = C_{p,b} (t'-t)^{p-1} ((t'-t)+C(t'-t))\\
    & \leq C_{b,p,T} (t'-t)^{\gamma p}.
  \end{align*}
  For $M^m$, using the Burkholder--Davis--Gundy inequality, the linear growth condition and the boundedness of $\bE[|X_m(s)|^p]$, we see
  \begin{align*}
    \bE[|M^m(t')-M^m(t)|^p]	& = \bE\Big[\Big|\int_t^{t'} \sigma(s, X^m(\kappa_m(s)),\L(X^m(\kappa_m(s)))) \dd B_s\Big|^p\Big]\\
    & \leq C_p \bE\Big[\Big|\int_t^{t'} \sigma^2(s, X^m(\kappa_m(s)),\L(X^m(\kappa_m(s)))) \dd B_s\Big|^\frac{p}{2}\Big]\\
    & \leq C_{p,\sigma} \bE\Big[\Big(\int_t^{t'} (1+ |X^m(\kappa_m(s))|)^2 \dd s \Big)^\frac{p}{2}\Big]\\
    & \lesssim C_{p,\sigma} \bE\Big[\Big(\int_t^{t'} 1+ |X^m(\kappa_m(s))|^2 \dd s \Big)^\frac{p}{2}\Big]\\
    & = C_{p,\sigma} \bE\Big[\Big((t'-t)+\int_t^{t'}|X^m(\kappa_m(s))|^2 \dd s \Big)^\frac{p}{2}\Big]\\
    & \lesssim C_{p,\sigma} \Big((t'-t))^\frac{p}{2}+\bE\Big[\Big(\int_t^{t'}|X^m(\kappa_m(s))|^2 \dd s \Big)^\frac{p}{2}\Big]\Big).
  \end{align*}
  By applying the Jensen inequality, we get
  \begin{align}\label{eq:tightness_M}
    \begin{split}\bE[|M^m(t')-M^m(t)|^p]	& \leq C_{p,\sigma} \Big((t'-t)^\frac{p}{2}+ (t'-t)^{\frac{p}{2}-1} \bE\Big[\int_t^{t'}|X^m(\kappa_m(s))|^p \dd s\Big]\Big)\\
    & \leq C_{p,\sigma} \Big((t'-t)^\frac{p}{2}+ (t'-t)^{\frac{p}{2}-1} \int_t^{t'}\sup_{r \in [0,T]} \bE[|X^m(\kappa_m(r))|^p] \dd s\Big)\\
    & \leq C_{p,\sigma} ((t'-t)^\frac{p}{2}+ (t'-t)^\frac{p}{2} C)\\
    & \lesssim C(t'-t)^{\gamma p}.
    \end{split}
  \end{align}
  Applying the Kolmogorov tightness criterion yields the tightness of the sequence of measures $(\bP_{X^m,A^m,M^m,B})_{m \in \bN}$. Hence, by Prokhorov's theorem (\cite[Theorem~2.4.7]{Karatzas1991}) we obtain the relative compactness (\cite[Definition~2.4.6]{Karatzas1991}) of $(\bP_{X^m,A^m,M^m,B})_{m \in \bN}$ in the space of probability measures on $C([0,T]; \bR^d \times \bR^d \times \bR^d \times \bR^m)$, i.e. there is a converging subsequence $(\bP_{X^{m_k},A^{m_k},M^{m_k},B})_{k \in \bN}$ such that
  \begin{equation*}
    \bP_{X^{m_k},A^{m_k},M^{m_k},B} \to \bP_{X,A,M,B} \quad \text{weakly as } k \to \infty
  \end{equation*}
  for some measure $\bP_{X,A,M,B}$.

  By the Skorokhod representation theorem (see e.g. \cite[Theorem~11.7.2]{Dudley2002}) there is a probability space $(\hat{\Omega}, \hat{\mathcal{F}}, \hat{\bP})$ with continuous stochastic processes $(\hat{X}^k)_{k \in \bN}$, $(\hat{A}^k)_{k \in \bN}$, $(\hat{M}^k)_{k \in \bN}$, $(\hat{B}^k)_{k \in \bN}$ and $\tilde{X}$, $\tilde{A}$, $\tilde{M}$, $\hat{B}$ on it, such that
  \begin{equation}\label{eq:equ_in_law}
    (\hat{X}^k,\hat{A}^k,\hat{M}^k,\hat{B}^k) \stackrel{\mathscr{D}}{\sim} (X^{m_k}, A^{m_k}, M^{m_k}, B), \qquad k \in \bN,
  \end{equation}
  and
  \begin{equation*}
    (\hat{X}^k,\hat{A}^k,\hat{M}^k,\hat{B}^k) \to (\tilde{X},\tilde{A},\tilde{M},\hat{B})
  \end{equation*}
  in $C([0,T]; \bR^d \times \bR^d \times \bR^d \times \bR^m)$ as $k \to \infty$ $\hat{\bP}$-a.s. By a version of the Yamada--Watanabe result, see \cite[Theorem~1.5]{Kurtz2014}, $\hat{M}^k_t=\int_0^t \sigma\big(s, \hat{X}^k(\kappa_{m_k}(s)), \L(\hat{X}^k(\kappa_{m_k}(s)))\big) \dd \hat{B}^k_s$, for $t \in [0,T]$ and $k \in \bN$, and the stochastic processes $(\hat{B}_k)_{k \in \bN}$ are Brownian motions as $\hat{B}^k \stackrel{\mathscr{D}}{\sim} B$. Thus, $\hat{M}^k$ is a local $\hat{\bP}$-martingale with quadratic variation
  \begin{equation*}
    \langle \hat{M}^k \rangle_t = \int_0^t (\sigma \sigma^\top)\big(s, \hat{X}^k(\kappa_{m_k}(s)),\L(\hat{X}^k(\kappa_{m_k}(s)))\big) \dd s.
  \end{equation*}

  Next we show that $\lim_{k \to \infty} W_\eta(\L(\hat{X}^k),\L(\tilde{X}))=0$. By \cite[Theorem~5.5]{Carmona2018} for this convergence to hold, we need the weak convergence of $(\L(\hat{X}^k))_{k \in \bN}$ towards $\L(\tilde{X})$, which is already implied by the above, and uniform integrability in the sense of
  \begin{equation*}
    \lim_{r \to \infty} \sup_{k \in \bN} \bE_{\hat{\bP}}\big[\|\hat{X}^k-y\|^\eta_\infty \mathbbm{1}_{\{\|\hat{X}^k-y\|_\infty\geq r \} }\big]=0
  \end{equation*}
  for some $y \in C([0,T];\bR^d)$. For the latter, we show that there is a uniform bound on $\bE_{\hat{\bP}}[\sup_{t \in [0,T]} |\hat{X}^k(t)|^\eta]$. First, note, by \eqref{eq:equ_in_law}, for all $k \in \bN$ that
  \begin{align*}
    \bE_{\hat{\bP}}\Big[ \sup_{t \in [0,T]} | \hat{X}^k (t)|^\eta\Big]
    & \leq C_\eta  \Big(\bE_{\hat{\bP}}[|\hat{X}^k(0)|^\eta] + \bE_{\hat{\bP}}\Big[ \sup_{t \in [0,T]} |\hat{X}^k(t)-\hat{X}^k(0)|^\eta\Big]\Big)\\
    & \leq C_\eta  \Big(\bE[|X_0|^\eta] + \bE_{\hat{\bP}}\Big[\sup_{0 \leq s,t \leq T} |\hat{X}^k(t)-\hat{X}^k(s)|^\eta\Big]\Big).
  \end{align*}
  $\hat{X}^k$ is $h$-H\"older for all $h \in (0,\gamma-1/p)$ for all $k \in \bN$ by the Kolmogorov continuity theorem (see \cite[Theorem~2.2.8]{Karatzas1991}) and \eqref{eq:tightness}. By Assumption~\ref{ass:kernel} we can choose some $\beta \in (0,\gamma-1/p)$ and $\alpha > \frac{2}{\beta}$ such that $\eta \alpha \leq p$ and obtain by the Garsia--Rodemich--Rumsey inequality (see \cite[Theorem~2.1.3]{Stroock1979}) that
  \begin{align*}
    &\sup_{0 \leq s,t \leq T} |\hat{X}^k(t) - \hat{X}^k(s)|\\
    &\quad \leq 8 \int_0^T \Bigg(\frac{4}{u^2} \int_0^T \int_0^T \Big(\frac{|\hat{X}^k(t)-\hat{X}^k(s)|}{|t-s|^\beta}\Big)^\alpha \dd s \dd t\, \Bigg)^{\frac{1}{\alpha}} \dd u^\beta\\
    &\quad = 8 ~ 4^\frac{1}{\alpha} \Bigg( \int_0^T \frac{1}{u^\frac{2}{\alpha}} \dd u^\beta \Bigg) \Bigg(\int_0^T \int_0^T \Bigg(\frac{|\hat{X}^k(t)-\hat{X}^k(s)|}{|t-s|^\beta}\Bigg)^\alpha \dd s \dd t\Bigg)^\frac{1}{\alpha}\\
    &\quad = 8 ~ 4^\frac{1}{\alpha} \Bigg( \int_0^T \beta u^{\beta-1-\frac{2}{\alpha}} \dd u \Bigg) \Bigg(\int_0^T \int_0^T \Big(\frac{|\hat{X}^k(t)-\hat{X}^k(s)|}{|t-s|^\beta}\Big)^\alpha \dd s \dd t\Bigg)^\frac{1}{\alpha}.
  \end{align*}
  The former integral is finite, since $\beta-1-\frac{2}{\alpha}>-1$. Recall that in \eqref{eq:tightness} we have shown that there is a $C > 0$ such that for all $k \in \bN$
  \begin{equation*}
    \bE_{\hat{\bP}}[|\hat{X}^k(t)-\hat{X}^k(s)|^{\eta \alpha}] \leq C |t-s|^{\eta \alpha \gamma}, \quad t,s \in [0,T].
  \end{equation*}
  Hence, by the Jensen inequality and the linearity of integrals, we have
  \begin{align*}
    \bE_{\hat{\bP}}\big[\sup_{0 \leq s,t \leq T} |\hat{X}^k(t)-\hat{X}^k(s)|^\eta\big] & \leq C \bE_{\hat{\bP}}\Big[\Big(\int_0^T \int_0^T \frac{|\hat{X}^k(t)-\hat{X}^k(s)|^\alpha}{|t-s|^{\alpha \beta}} \dd s \dd t\Big)^\frac{\eta}{\alpha}\Big]\\
    & \leq C T^{\frac{2(\eta-1)}{\alpha}}\bE_{\hat{\bP}}\Big[\Big(\int_0^T \int_0^T \frac{|\hat{X}^k(t)-\hat{X}^k(s)|^{\eta \alpha}}{|t-s|^{\eta \alpha \beta}} \dd s \dd t\Big)^\frac{1}{\alpha}\Big]\\
    & \leq C_{T,\eta,\alpha} \Big(\int_0^T \int_0^T \frac{\bE_{\hat{\bP}}[|\hat{X}^k(t)-\hat{X}^k(s)|^{\eta \alpha}]}{|t-s|^{\eta \alpha \beta}} \dd s \dd t\Big)^\frac{1}{\alpha}\\
    & \leq C_{T,\eta,\alpha} \Big(\int_0^T \int_0^T \frac{C|t-s|^{\eta \gamma \alpha}}{|t-s|^{\eta \alpha \beta}} \dd s \dd t\Big)^\frac{1}{\alpha}\\
    & \leq \tilde{C} \Big(\int_0^T \int_0^T |t-s|^{\eta \alpha(\gamma-\beta)} \dd s \dd t\Big)^\frac{1}{\alpha}.
  \end{align*}
  Note that $\eta \alpha (\gamma-\beta) > 0$, hence the integral is finite. This shows that there is a $C > 0$ such that for all $k \in \bN$ we have $\bE_{\hat{\bP}}[ \sup_{t \in [0,T]} | \hat{X}^k (t)|^\eta] \leq C$, and the convergence of the Wasserstein distance of the laws follows.

  Due to the $\hat{\P}$-a.s. convergence of the sequence of martingales $(\hat{M}^k)_{k\in \N}$ to $\tilde{M}$, \cite[Proposition~IX.1.17]{Jacod2003} implies that $\tilde{M}$ is also a local $\hat\P$-martingale, and the convergence of
  \begin{equation*}
    \int_0^t (\sigma \sigma^\top)\big(s, \hat{X}^k(\kappa_{m_k}(s)),\L(\hat{X}^k(\kappa_{m_k}(s)))\big) \dd s
  \end{equation*}
  in probability, see Lemma~\ref{lemma:convergence_integrals}, together with \cite[Corollary~VI.6.29]{Jacod2003} implies that the quadratic variation of $\tilde{M}$ is $\langle \tilde{M}\rangle_t=\int_0^t (\sigma \sigma^\top)(s, \tilde{X}_s,\L(\tilde{X}_s)) \dd s$. Therefore, the representation theorem for local martingales with absolutely continuous quadratic variations (see e.g. \cite[Theorem~II.7.1$^\prime$]{Ikeda_Watanabe1989}) yields the existence of some probability space $(\tilde{\Omega},\tilde{\mathcal{F}},\tilde{\P})$, which is an extension of $(\hat{\Omega},\hat{\mathcal{F}},\hat{P})$, and a Brownian motion $(\tilde{B}_t)_{t\in[0,T]}$ on it, such that $\tilde{M}_t=\int_0^t \sigma(s,\tilde{X}_s,\L(\tilde{X}_s))\dd \tilde{B}_s$ for $t\in[0,T]$.
\end{proof}

As third step, we derive a solution to the Volterra local martingale problem, based on the converging sequences obtained in Lemma~\ref{lem:limitprocesses}.

\begin{lemma}\label{lem:solution to Volterra martingale problem}
  Suppose Assumption~\ref{ass:volatility} and~\ref{ass:kernel}. Then, there exists a solution $P$ to the Volterra local martingale problem given $(\mu_0,b,\sigma,K_b,K_\sigma)$.
\end{lemma}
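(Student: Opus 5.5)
We take the limit objects $(\tilde X,\tilde A,\tilde M,\tilde B)$ on $(\tilde\Omega,\tilde{\mathcal F},\tilde\P)$ from Lemma~\ref{lem:limitprocesses}. We set $\tilde Z:=\tilde A+\tilde M$, equip the space with the usual augmentation of the filtration generated by $(\tilde X,\tilde A,\tilde M,\tilde B)$, and define $P:=\tilde\P\circ\tilde Z^{-1}$. By Proposition~\ref{prop:equivalence}, it suffices to verify the conditions of Definition~\ref{def: MP for stoch process} for $\tilde Z$.

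\textbf{Step 1: semimartingale structure and drift.} Since $\hat A^k_0=\hat X^k_0\sim\mu_0$, we get $\tilde Z_0=\tilde A_0\sim\mu_0$. The process $\tilde M$ is a continuous local martingale with the representation of Lemma~\ref{lem:limitprocesses}. For $\tilde A$ we must show
\begin{equation*}
\tilde A_t=\tilde X_0+\int_0^t b(s,\tilde X_s,\L(\tilde X_s))\dd s .
\end{equation*}
This follows by passing to the limit in $\hat A^k_t=\hat X^k_0+\int_0^t b(s,\hat X^k(\kappa_{m_k}(s)),\L(\hat X^k(\kappa_{m_k}(s))))\dd s$, using Lemma~\ref{lemma:convergence_integrals}. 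The ingredients are: $\hat X^k(\kappa_{m_k}(s))\to\tilde X_s$ a.s. by uniform convergence plus equicontinuity; $W_\eta(\L(\hat X^k(\kappa_{m_k}(s))),\L(\tilde X_s))\to0$ by Remark~\ref{remark_Wasserstein} and the Wasserstein convergence shown in the previous proof, together with the continuity of $s\mapsto\L(\tilde X_s)$ in $W_\eta$; and the locally uniform continuity of Assumption~\ref{ass:volatility}. Hence $\tilde A$ is continuous, adapted, of bounded variation, hence predictable, and $\tilde Z$ is a continuous semimartingale. Itô's formula applied to $f(\tilde Z)$ then makes $\mathcal M^f$ a local martingale for every $f\in C_0^2(\R^d)$, exactly as in the proof of Proposition~\ref{prop:equivalence}, provided the $X$ appearing in $\mathcal A^f$ is $\tilde X$.

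\textbf{Step 2: the Volterra identity \eqref{eq:X_MP}.} This is the main obstacle. We must show, for each $t$ and a.s.,
\begin{equation*}
\tilde X_t=\tilde Z_0+\int_0^t K_b(s,t)\dd\tilde A_s+\int_0^t K_\sigma(s,t)\dd\tilde M_s .
\end{equation*}
The drift part is handled as in Step 1: $\int_0^t K_b(s,t)\,b(s,\hat X^k(\kappa(s)),\cdot)\dd s\to\int_0^tK_b(s,t)\,b(s,\tilde X_s,\cdot)\dd s$ in probability. We use Hölder with $K_b(\cdot,t)\in L^{1+\epsilon}$ and uniform moments of order $(1+\epsilon)/\epsilon<p$ (Lemma~\ref{lemma:boundedness}) for uniform integrability, together with pointwise convergence of the integrand. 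For the stochastic part, the difficulty is that $K_\sigma(\cdot,t)$ is merely $L^{2+\epsilon}$ and not continuous, so the stochastic integral is not a continuous functional of paths. We first approximate $K_\sigma(\cdot,t)$ in $L^{2+\epsilon}([0,t])$ by step functions $K^{(n)}$. For step kernels, $\int_0^tK^{(n)}(s)\dd\hat M^k_s$ is a finite linear combination of increments of $\hat M^k$ and converges a.s. to the corresponding expression in $\tilde M$. The error is controlled uniformly in $k$ by BDG and Hölder:
\begin{equation*}
\bE\Big|\int_0^t (K_\sigma(s,t)-K^{(n)}(s))\dd \hat M^k_s\Big|^2\lesssim \|K_\sigma(\cdot,t)-K^{(n)}\|_{L^{2+\epsilon}}^2\sup_k\bE\Big[\Big(\int_0^t|\sigma(\dots)|^{\frac{4+2\epsilon}{\epsilon}}\dd s\Big)^{\frac{2\epsilon}{4+2\epsilon}}\Big],
\end{equation*}
which is finite because $p>\frac{4+2\epsilon}{\epsilon}$. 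The same bound holds for $\tilde M$. A $3\epsilon$-argument then shows that $\int_0^tK_\sigma(s,t)\dd\hat M^k_s\to\int_0^tK_\sigma(s,t)\dd\tilde M_s$ in probability.

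\textbf{Step 3: conclusion.} Since $(\hat X^k,\hat A^k,\hat M^k)$ has the law of $(X^{m_k},A^{m_k},M^{m_k})$, the Euler scheme \eqref{eq:Euler_approx_process} gives, almost surely,
\begin{equation*}
\hat X^k_t=\hat A^k_0+\int_0^tK_b(s,t)\dd\hat A^k_s+\int_0^tK_\sigma(s,t)\dd\hat M^k_s .
\end{equation*}
Here one writes the right-hand side with $\kappa_{m_k}$, which matches the scheme. This identity transfers to the Skorokhod space, because the step-kernel approximants are measurable path functionals and the identity holds in law, hence a.s. Taking limits via Step 2, with $\hat X^k_t\to\tilde X_t$, yields the Volterra identity for $\tilde X$. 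Thus $(\tilde Z,\tilde\Omega,\dots)$ solves the problem of Definition~\ref{def: MP for stoch process}, and $P$ solves that of Definition~\ref{def:martproblem_new}.
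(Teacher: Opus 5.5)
Your proposal is correct. It reaches the result by a different route from the paper at each of its three main steps.

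\textbf{The local martingale property of $\mathcal M^f$.} The paper passes to the limit in $\mathcal M^{f,k}$, using Lemma~\ref{lemma:convergence_integrals} and the stability of local martingales under convergence (Jacod--Shiryaev, Prop.~IX.1.17). It only asserts that $\tilde Z$ is a semimartingale ``by construction''. You instead identify
\begin{equation*}
\tilde A_t=\tilde X_0+\int_0^t b(s,\tilde X_s,\L(\tilde X_s))\dd s
\end{equation*}
and then apply It\^o's formula to $\tilde Z$, using the representation of $\tilde M$ from Lemma~\ref{lem:limitprocesses}. This makes the bounded-variation part explicit. Combined with your Step~2, it in fact shows directly that $(\tilde X,\tilde B)$ is a weak solution.

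\textbf{The Volterra identity for $\hat X^k$ on the Skorokhod space.} The paper uses pathwise uniqueness of the Euler scheme together with Kurtz's Yamada--Watanabe theorem. You transfer the Euler identity through equality in law, using step-kernel approximants as measurable path functionals, which is more elementary. To make this airtight, state explicitly that $\hat M^k$ is a martingale for the natural filtration of $(\hat X^k,\hat A^k,\hat M^k)$; this is inherited from the law. That fact ensures the step-kernel integrals also converge to $\int_0^tK_\sigma(s,t)\dd\hat M^k_s$ on the new space.

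\textbf{Convergence of the stochastic Volterra term.} The paper proves $\mathcal H^2$-convergence of $\hat M^k$ via Vitali. It then applies It\^o's isometry to $\hat M^k-\tilde M$ and bounds the result by $\langle\hat M^k-\tilde M\rangle_T$. You approximate $K_\sigma(\cdot,t)$ in $L^{2+\epsilon}$ by step functions, with $k$-uniform H\"older/isometry bounds. This needs only a.s.\ convergence of $\hat M^k$ at finitely many times and the moment condition $p>\frac{4+2\epsilon}{\epsilon}$.

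Your route thereby sidesteps two points the paper passes over quickly:
\begin{itemize}
\item the isometry for $\hat M^k-\tilde M$ presupposes a common filtration in which both are martingales, and the Skorokhod construction does not supply one;
\item $\int_0^t K_\sigma(s,t)^2\dd\langle\hat M^k-\tilde M\rangle_s$ is not controlled by $\langle\hat M^k-\tilde M\rangle_T$ when $K_\sigma(\cdot,t)$ is singular.
\end{itemize}

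Your choice of the filtration generated by $(\tilde X,\tilde A,\tilde M,\tilde B)$, rather than by $\tilde Z$ alone as in the paper, is also what makes $\tilde Z=\tilde A+\tilde M$ a legitimate semimartingale decomposition. The passage to the canonical formulation is then delegated to Proposition~\ref{prop:equivalence}, as the paper itself does.

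What the paper's route buys in exchange is that it stays within the martingale-problem framework. It never needs to identify the drift of $\tilde Z$ or invoke $\tilde B$ when verifying that $\mathcal M^f$ is a local martingale.
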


\begin{proof}
  We introduce the stochastic processes $(Z^m)_{m\in\N}$, $(\hat{Z}^k)_{k\in\N}$ and $Z$ by
  \begin{equation*}
    Z_t^m := A_t^m + M_t^m,\quad \hat{Z}_t^k:=\hat{A}_t^k + \hat{M}_t^k \quad\text{and}\quad \tilde{Z}_t:= \tilde{A}_t + \tilde{M}_t, \quad t\in[0,T].
  \end{equation*}
  on the probability space $(\tilde{\Omega}, \tilde{\mathcal{F}},\tilde{\bP})$, as defined in Lemma~\ref{lem:limitprocesses}.
  
  Consider the measure $P$ on $(C([0,T];\bR^d),\mathcal{B}(C([0,T];\bR^d)))$ defined by $P = \tilde{\bP} \circ \tilde{Z}^{-1}$ with the filtration $(\mathcal{G}_t)_{t \in [0,T]}$ being the right-continuous augmentation of the completion of $\sigma(\tilde{Z}_s, 0 \leq s \leq t)$. We show that $P$ solves the distribution-dependent Volterra local martingale problem given $(\mu_0,b,\sigma,K_b,K_\sigma)$ in the sense of Definition~\ref{def:martproblem_new}. It is clear by construction that $\tilde{Z}$ is a semimartingale. It remains to show the two equations in (ii) of Definition~\ref{def:martproblem_new}:
	
  We start with \eqref{def:M_f_new}. For $k\in\N$ and $f\in C_0^2(\R^d)$, the stochastic process $(\mathcal{M}^{f,k}_t)_{t\in[0,T]}$ is defined by
  \begin{equation*}
    \mathcal{M}^{f,k}_t := f(\hat{Z}_t^k)-\int_0^t \mathcal{A}^{f}\big(s,\hat{X}_k(\kappa_{m_k}(s)),\L(\hat{X}^k(\kappa_{m_k}(s))),\hat{Z}_s^k\big)\dd s,\quad t\in [0,T].
  \end{equation*}
  Due to $(\hat{X}^k,\hat{Z}^k)\stackrel{\mathscr{D}}{\sim}(X^{n_k},Z^{n_k})$ and applying It{\^o} formula on $f(\hat{Z}^k_t)$, we have
  \begin{align*}
    \mathcal{M}^{f,k}_t &= f(\hat{Z}_t^k)-\int_0^t \mathcal{A}^{f}\big(s,\hat{X}^k(\kappa_{m_k}(s)),\L(\hat{X}^k(\kappa_{m_k}(s))),\hat{Z}_s^k\big)\dd s\\
    & = f(\hat{Z}_t^k)- \int_0^t \sum_{i=1}^d b_i\big(s, \hat{X}_k(\kappa_{m_k}(s)),\L(\hat{X}^k(\kappa_{m_k}(s)))\big) \frac{\partial f(\hat{Z}_s^k)}{\partial z_i} \dd s \\
    & \qquad - \int_0^t \frac{1}{2} \sum_{i,j=1}^d (\sigma \sigma^\top)_{ij} \big(s, \hat{X}^k(\kappa_{m_k}(s)),\L(\hat{X}^k(\kappa_{m_k}(s)))\big) \frac{\partial^2 f(\hat{Z}_s^k)}{\partial z_i \partial z_j} \dd s\\
    & = f(\hat{Z}_0^k) + \sum_{j=1}^m \sum_{i=1}^d \int_0^t \frac{\partial f(\hat{Z}_s^k)}{\partial z_i} \sigma_{ij} \big(s, \hat{X}^k(\kappa_{m_k}(s)),\L(\hat{X}^k(\kappa_{m_k}(s)))\big)  \dd \tilde{B}_s^j.
  \end{align*}
  Note that $\hat{Z}^k \to \tilde{Z}$ by construction as $k \to \infty$. Now Lemma~\ref{lemma:convergence_integrals} implies
  \begin{align*}
    \int_0^t \mathcal{A}^{f}\big(s,\hat{X}^k(\kappa_{m_k}(s)),\L(\hat{X}^k(\kappa_{m_k}(s))),\hat{Z}_s^k\big)\dd s \to \int_0^t \mathcal{A}^{f}(s,\tilde{X}_s,\L(\tilde{X}_s),\tilde{Z}_s)\dd s
  \end{align*}
  and thus $\mathcal{M}^{f,k} \to \mathcal{M}^{f}$ weakly as $k \to \infty$. Then, the limiting process $(\mathcal{M}^{f}_t)_{t \in [0,T]}$ is itself a local martingale on $(C([0,T];\bR^d),\mathcal{B}(C([0,T];\bR^d)), (\mathcal{G}_t)_{t\in[0,T]},P)$ by \cite[Proposition~IX.1.17]{Jacod2003}.

  Next we verify \eqref{eq:X_MP_new}. We obtain by \cite[Theorem~1.1]{Wang2008} pathwise uniqueness of $\hat{X}^k$, $k \in \bN$, iteratively on $[t_i^{m_k},t_{i+1}^{m_k}]$, $i = 0,1,\ldots, N(m_k)-1$. To see this, note that the coefficients only change in time on these intervals. Then, the general version of the Yamada--Watanabe result (\cite[Theorem~1.5]{Kurtz2014}) yields that $\hat{X}^k$ can be represented as the stochastic output of the distribution-dependent stochastic Volterra equation \eqref{eq:MVSVE} from the stochastic input $\hat{M}^k$ in the same way as $X^{m_k}$ from $M^{m_k}$, hence, we get that
  \begin{equation}\label{eq:sve_skorokhod}
    \hat{X}^k(t)
    = \hat{X}^k_0+\int_0^t K_b(s,t)\dd \hat{A}^k_s+\int_0^t K_\sigma(s,t)\dd \hat{M}^k_s,\quad t\in[0,T],
  \end{equation} holds $P$-a.s. Note that $\tilde{Z}_0=\tilde{A}_0=\hat{A}^k_0=\hat{X}^k_0=\tilde{X}_0 \sim \mu$ for all $k \in \bN$. By Lemma~\ref{lemma:convergence_integrals} we have
  \begin{equation*}
    \Big(\int_0^t K_b(s,t)\dd \hat{A}^k_s \Big)_{t \in [0,T]} \xrightarrow{P} \Big(\int_0^t K_b(s,t)\dd A_s\Big)_{t \in [0,T]}
  \end{equation*}
  in $C([0,T];\R^d)$ as $k\to\infty$. For the martingale part we can prove uniform integrability of $((\hat{M}^k)^2)_{k \in \bN}$ following the same steps as we used to show the uniform integrability of $((\hat{X}^k)^\eta)_{k \in \bN}$ near the end of the proof of Lemma~\ref{lem:limitprocesses}. When doing so in place of \eqref{eq:tightness} one can use \eqref{eq:tightness_M}. Then, by the Vitali convergence theorem, see \cite[Exercise~4.13]{Cinlar2011}, $(\hat{M}^k)_{k \in \bN}$ converges towards $\tilde{M}$ in $\mathcal{H}^2$. By the It{\^o} isometry, for any $ t \in [0,T]$, we have
  \begin{align*}
    \bE_P\Big[ \Big( \int_0^t K_\sigma (s,t) \dd (\hat{M}^k_s - \tilde{M}_s) \Big)^2 \Big] = \bE_P\Big[\int_0^t (K_\sigma(s,t))^2 \dd \langle \hat{M}^k-\tilde{M}\rangle_s\Big].
  \end{align*}
  By the $\mathcal{H}^2$-convergence of $(\hat{M}^k)_{k \in \bN}$, we have $\langle \hat{M}^k-\tilde{M}\rangle_T \to 0$ and since $t \mapsto \langle \hat{M}^k-\tilde{M}\rangle_t$ is increasing we can conclude that
  \begin{align*}
    \bE_P\Big[ \Big( \int_0^t K_\sigma(s,t) \dd (\hat{M}^k_s - \tilde{M}_s)\Big)^2 \Big] \to 0, \quad t \in [0,T].
  \end{align*}
  This implies
  \begin{equation*}
    \int_0^t K_\sigma(s,t) \dd \hat{M}_s^k \xrightarrow{P} \int_0^t K_\sigma(s,t) \dd \tilde{M}_s
  \end{equation*}
  for all $t \in [0,T]$. We know that there is a continuous stochastic process $\tilde{N}=(\tilde{N}_t)_{t \in [0,T]}$ such that
  \begin{equation*}
    \Big(\int_0^t K_\sigma(s,t) \dd \hat{M}_s^k\Big)_{t \in [0,T]} \xrightarrow{P} \tilde{N},
  \end{equation*}
  i.e. we have uniform convergence in probability. In particular, for each fixed $t$, we have
  \begin{equation*}
    \Big|\int_0^t K_\sigma(s,t) \dd \hat{M}_s^k - \tilde{N}_t\Big| \leq \sup_{r \in [0,T]} \Big|\int_0^r K_\sigma(s,r) \dd \hat{M}_s^k - \tilde{N}_r\Big| \xrightarrow{P} 0.
  \end{equation*}
  Therefore, we observe that
  \begin{equation*}
    \int_0^t K_\sigma(s,t) \dd \hat{M}_s^k \xrightarrow{P} \tilde{N}_t
    \quad\text{and}\quad
    \int_0^t K_\sigma(s,t) \dd \hat{M}_s^k \xrightarrow{P} \int_0^t K_\sigma(s,t) \dd \tilde{M}_s.
  \end{equation*}
  Since the limit is unique, we have
  \begin{equation*}
    \tilde{N}_t = \int_0^t K_\sigma(s,t) \dd \tilde{M}_s, \qquad P\text{-a.s.}
  \end{equation*}
  Since this holds for each $t \in [0,T]$, we have that
  \begin{equation*}
    P \Big(\tilde{N}_t = \int_0^t K_\sigma(s,t) \dd \tilde{M}_s\Big) = 1 \quad\text{for all } t \in [0,T],
  \end{equation*}
  i.e. the processes are modifications of each other. Continuous stochastic processes which are modifications are indistinguishable. Hence,
  \begin{equation*}
    P \Big(\tilde{N}_t = \int_0^t K_\sigma(s,t) \dd \tilde{M}_s \text{ for all } t \in [0,T]\Big) = 1,
  \end{equation*}
  i.e., $\tilde{N} = \int_0^\cdot K_\sigma(s,\cdot) \dd \tilde{M}_s$ almost surely in $C([0,T];\bR^d)$.
   
  We can take the limit in probability in \eqref{eq:sve_skorokhod} or the $P$-a.s. limit for some subsequence, to obtain that \eqref{eq:X_MP_new} holds $P$-a.s.
\end{proof}

\section{Properties of solutions to distribution-dependent SVEs}\label{sec:rest}

In this section we derive some results regarding the integrability and sample path regularity of (weak) solutions to the distribution-dependent stochastic Volterra equation~\eqref{eq:MVSVE}. We start by showing that solutions to distribution-dependent SVEs are uniformly bounded in~$L^p$.

\begin{lemma}
  Suppose Assumption~\ref{ass:volatility} and~\ref{ass:kernel}. Given the distribution-dependent SVE~\eqref{eq:MVSVE} there is a $C_p >0$ such that $\bE[|X(t)|^p] \leq C_p$ holds for any $t \in [0,T]$.
\end{lemma}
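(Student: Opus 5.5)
The plan is to repeat the moment estimate from Lemma~\ref{lemma:boundedness}, now for the exact equation~\eqref{eq:MVSVE}, and to replace the discrete Gronwall lemma by its continuous version. The $p$ here is the one from Assumption~\ref{ass:kernel}. I set $\tilde p := \frac{p}{p-2}\le 1+\frac{\epsilon}{2}$ and $p' := \frac{p}{p-1}\le 1+\epsilon$, as before.

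\textbf{Step 1: integral inequality.} For fixed $t\in[0,T]$ I split $|X_t|^p$ into the initial value, the drift integral and the stochastic integral.
\begin{itemize}
\item The initial value contributes $\bE[|X_0|^p]<\infty$, since $\mu_0\in\mathcal{P}_p(\R^d)$.
\item For the stochastic integral I apply the Burkholder--Davis--Gundy inequality~\eqref{eq:BDG} with the second kernel argument frozen at $t$, as in Remark~\ref{rmk:BDG inequality}. Then H\"older's inequality with exponent $\tilde p$ on the kernel factor gives the bound
\[
\Big(\int_0^t|K_\sigma(s,t)|^{2\tilde p}\dd s\Big)^{p/(2\tilde p)}\int_0^t \bE\big[|\sigma(s,X_s,\L(X_s))|^p\big]\dd s .
\]
\item For the drift integral, H\"older's inequality with exponent $p'$ gives the analogous bound with $\big(\int_0^t|K_b(s,t)|^{p'}\dd s\big)^{p/p'}$ in front.
\item Both kernel factors are bounded uniformly in $t$ by Remark~\ref{rem:asskernel_to_assshort} (and H\"older, since $2\tilde p\le 2+\epsilon$ and $p'\le 1+\epsilon$).
\item The linear growth condition in Assumption~\ref{ass:volatility}, combined with $W_\eta(\delta_0,\L(X_s))^p\le \bE[|X_s|^\eta]^{p/\eta}\le \bE[|X_s|^p]$ (valid because $p\ge\eta$), bounds the coefficient moments by $1+2\bE[|X_s|^p]$ up to constants.
\end{itemize}
Writing $f(t):=\bE[|X_t|^p]$, these bounds combine to
\[
f(t)\le C_{p,T}\Big(\bE[|X_0|^p]+C_{K,b,\sigma}\int_0^t \big(1+2f(s)\big)\dd s\Big),\qquad t\in[0,T],
\]
which is the continuous analogue of \eqref{eq:boundedness_1}. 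The right-hand side depends only on $t$ through an ordinary Lebesgue integral of $f$, so the Volterra structure has been absorbed entirely into the constants.

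\textbf{Step 2: Gronwall.} Provided $f$ is measurable and $\int_0^T f(s)\dd s<\infty$, the classical Gronwall lemma gives $f(t)\le \big(C\bE[|X_0|^p]+CT\big)e^{2Ct}$, which is a bound $C_p$ independent of $t$. Measurability of $f$ follows from progressive measurability of $X$ and Fubini's theorem.

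\textbf{Main obstacle: a priori finiteness.} A weak solution in the sense of Definition~\ref{def:weak solution} is only known to lie in $L^1(\Omega\times[0,T])$, so Gronwall cannot be applied to a possibly infinite $f$. Stopping times do not localize a Volterra equation, because $X_{t\wedge\tau}$ does not satisfy an equation of the same form. I would proceed as follows.
\begin{itemize}
\item For general solutions, I would state the lemma for solutions in $L^p(\Omega\times[0,T];\R^d)$, as in the definition of $L^p$-solutions. Step 1 then shows $f\in L^1([0,T])$ is actually bounded, and Step 2 closes the argument.
\item For the solution constructed in Lemma~\ref{lem:limitprocesses} and Lemma~\ref{lem:solution to Volterra martingale problem}, finiteness is automatic, and even the whole bound follows. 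There $\hat X^k(t)\to\tilde X_t$ holds $\tilde\bP$-a.s., and $\hat X^k\stackrel{\mathscr{D}}{\sim}X^{m_k}$. Fatou's lemma together with Lemma~\ref{lemma:boundedness} therefore gives $\bE[|\tilde X_t|^p]\le\liminf_k \bE[|X^{m_k}(t)|^p]\le C_p$ uniformly in $t$.
\end{itemize}
I would present both routes, the Gronwall argument as the main proof and the Fatou argument as the remark that covers the solutions produced by Theorem~\ref{thm: weak solution to mean-field SVEs}.
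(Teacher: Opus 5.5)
Your proposal is correct. Your second route is exactly the paper's proof: Fatou's lemma along $\hat X^k\to\tilde X$, using $\hat X^k\stackrel{\mathscr{D}}{\sim}X^{m_k}$ and Lemma~\ref{lemma:boundedness}.

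Your main route is genuinely different. You redo the estimate \eqref{eq:boundedness_1} for the exact equation and close with the continuous Gronwall lemma. H\"older's inequality with $p'\le 1+\epsilon$ and $2\tilde p\le 2+\epsilon$ decouples kernel and coefficient, so the kernel factors are bounded uniformly in $t$. This is sound.

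The two arguments buy different things.
\begin{itemize}
\item The paper's Fatou argument needs no a priori integrability. Finiteness is automatic in the discrete Gronwall step of Lemma~\ref{lemma:boundedness}, thanks to the iterative structure of the Euler scheme. But it only concerns the particular solution produced by the Euler/Skorokhod construction, so the lemma's ``given the SVE'' must be read in that sense.
\item Your Gronwall argument gives a genuine a priori bound for every $L^p$-solution. The constant depends only on $\bE[|X_0|^p]$, $T$ and the constants in Assumptions~\ref{ass:volatility} and~\ref{ass:kernel}. However, it needs $\int_0^T\bE[|X_t|^p]\dd t<\infty$ as input, which Definition~\ref{def:weak solution} does not supply.
\end{itemize}
So neither route covers an arbitrary weak solution, and you are right to flag this.

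One nuance on your localization remark. The standard Volterra substitute for stopped processes is to estimate $\bE[|X_t|^p\mathbbm{1}_{\{t<\tau_n\}}]$, using $\mathbbm{1}_{\{t<\tau_n\}}\le\mathbbm{1}_{\{s<\tau_n\}}$ for $s\le t$. This works for continuous solutions of classical SVEs. What prevents it from rescuing the general case here is that it leaves the deterministic term $W_\eta(\delta_0,\L(X_s))^p$ unlocalized. The failure of $X_{t\wedge\tau}$ to solve an equation of the same form is not the real obstruction.
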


\begin{proof}
  Take the approximating sequence $(\hat{X}^k)_{k \in \bN}$ from the previous section. $(\hat{X}^k)_{k \in \bN}$ converges strongly against $X$ (see proof of Lemma~\ref{lem:limitprocesses}). By \eqref{eq:equ_in_law} we have $\hat{X}^k  \stackrel{\mathscr{D}}{\sim} X^{m_k}$, $k \in \bN$, for some increasing sequence $(m_k)_{k \in \bN} \subset \bN$ and it follows with Lemma~\ref{lemma:boundedness} that $\bE[|\hat{X}^k(t)|^p] \leq C_p$ for all $k \in \bN$ and $t \in [0,T]$ and some $C_p > 0$. Then, by Fatou's Lemma we have
  \begin{align*}
    \bE[|X_t|^p] \leq \liminf_{k \to \infty} \bE [|\hat{X}^k(t)|^p] \leq C_p.
  \end{align*}
\end{proof}

Next, we turn to the regularity of the solutions. In general, solutions to stochastic Volterra equations need not be continuous. As we will see the regularity is strongly linked to the regularity of the kernel. For more on the path regularity of standard stochastic Volterra equations of convolution type we refer the reader to \cite{AbiJaber2021}.

\begin{lemma}
  Suppose Assumption~\ref{ass:volatility} and~\ref{ass:kernel}. Let $X$ be a solution to the distribution-dependent stochastic Volterra equation \eqref{eq:MVSVE}. Then there exists a modification $X^\ast$ of $X$  that is $h$-H{\"o}lder-continuous for any $h \in (0,\frac{\gamma p-1}{p})$ with $\gamma$ and $p$ as in Assumption~\ref{ass:kernel}.
\end{lemma}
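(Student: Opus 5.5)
The plan is to apply the Kolmogorov continuity theorem (\cite[Theorem~2.2.8]{Karatzas1991}) directly to the solution $X$. This requires an increment estimate of the form
\begin{equation*}
  \bE[|X_{t'}-X_t|^p] \leq C\,(t'-t)^{\gamma p}, \qquad (t,t')\in\Delta_T,
\end{equation*}
with $\gamma p>1$, which Assumption~\ref{ass:kernel} guarantees. Kolmogorov then gives a modification $X^\ast$ that is $h$-H\"older continuous for every $h\in(0,\frac{\gamma p-1}{p})$, which is exactly the claimed range.

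To obtain the increment estimate, I will repeat the computation leading to \eqref{eq:tightness} in the proof of Lemma~\ref{lem:limitprocesses}, now with $X^m(\kappa_m(s))$ replaced by $X_s$ and $\L(X^m(\kappa_m(s)))$ replaced by $\L(X_s)$. Subtracting \eqref{eq:MVSVE} at times $t$ and $t'$ splits $X_{t'}-X_t$ into four pieces:
\begin{itemize}
  \item the drift integral against $K_b(s,t')-K_b(s,t)$ over $[0,t]$;
  \item the drift integral against $K_b(s,t')$ over $[t,t']$;
  \item the same two pieces for the stochastic integral, with $K_\sigma$.
\end{itemize}
The two drift terms are handled by H\"older's inequality with exponents $1+\epsilon$ and $\frac{1+\epsilon}{\epsilon}$. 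The two stochastic terms are handled by the Burkholder--Davis--Gundy inequality \eqref{eq:BDG}, which is legitimate because the kernel's second argument is frozen (Remark~\ref{rmk:BDG inequality}), followed by H\"older's inequality with exponents $\frac{2+\epsilon}{2}$ and $\frac{2+\epsilon}{\epsilon}$. The kernel factors then contribute $L^{p/(1+\epsilon)}(t'-t)^{\gamma p}$ and $L^{p/(2+\epsilon)}(t'-t)^{\gamma p}$ respectively.

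What remains is to bound the coefficient factors uniformly, for example
\begin{equation*}
  \bE\Big[\Big(\int_0^T |\sigma(s,X_s,\L(X_s))|^{\frac{4+2\epsilon}{\epsilon}}\dd s\Big)^{\frac{p\epsilon}{4+2\epsilon}}\Big]
\end{equation*}
and its drift analogue. The linear growth condition bounds $|\sigma(s,X_s,\L(X_s))|$ by $C(1+|X_s|+\bE[|X_s|^\eta]^{1/\eta})$. Since $\frac{p\epsilon}{4+2\epsilon}>1$ and $\frac{p\epsilon}{1+\epsilon}>1$, Jensen's inequality reduces both factors to $\int_0^T \bE[|X_s|^p]\dd s$, up to constants depending on $T$, $p$ and $\epsilon$.

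The main obstacle is ensuring that $\sup_{t\in[0,T]}\bE[|X_t|^p]<\infty$ for the solution $X$ in the statement. The preceding lemma provides this uniform $L^p$ bound; I will use it as given, noting that it was established for the solution obtained as the limit of the Euler scheme. If one wants the statement for an arbitrary weak solution, I would instead derive the bound a priori. Setting $\phi(t):=\bE[|X_t|^p]$, the same H\"older/BDG estimates as in Lemma~\ref{lemma:boundedness} give
\begin{equation*}
  \phi(t)\leq C\Big(1+\int_0^t \phi(s)\dd s\Big),
\end{equation*}
so Gronwall's inequality applies, after localizing with stopping times to ensure finiteness. With the bound in hand, combining the four estimates yields the increment estimate above, and the Kolmogorov continuity theorem completes the argument.
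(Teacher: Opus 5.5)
Your argument is correct, but it takes a different route from the paper's. The paper never estimates increments of $X$ itself. It takes the bound \eqref{eq:tightness}, which holds uniformly in $m$ for the Euler scheme, passes it to the limit by Fatou's lemma along the Skorokhod copies $\hat X^k\to\tilde X$, and then applies Kolmogorov. You instead estimate $\bE[|X_{t'}-X_t|^p]$ directly from \eqref{eq:MVSVE}, with the same four-term splitting and H\"older/BDG/Jensen steps, and you feed in only the $p$-th moment bound of the preceding lemma.

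The paper's route is shorter and needs no new computation. However, it is tied to the constructed solution, whose continuity is already automatic from the a.s.\ convergence in $C([0,T];\R^d)$, so only the H\"older exponent is new there. Your route repeats the estimate but proves more. Every weak solution with $X\in L^p(\Omega\times[0,T];\R^d)$ has such a modification, because only $\int_0^T\bE[|X_s|^p]\dd s$ enters (recall $W_\eta(\delta_0,\L(X_s))^p\le\bE[|X_s|^p]$). The dependence on the Euler construction is thus confined to the moment bound.

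Your closing Gronwall remark for arbitrary weak solutions is not a proof as sketched. Hitting-time localization needs locally bounded paths, which are not known for a merely progressively measurable $X$. Moreover, the law term $\bE[|X_s|^\eta]^{p/\eta}$ is unaffected by stopping. So $\phi(t)\le C(1+\int_0^t\phi(s)\dd s)$ only closes if integrability of $s\mapsto\bE[|X_s|^p]$ is known beforehand. Neither the paper nor your main argument needs this extension, so it does not affect the result.
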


\begin{proof}
  In the proof of Lemma~\ref{lem:limitprocesses} we have seen that there is a $C>0$ such that for any $X^m$ of the approximating sequence $(X^m)_{m \in \bN}$ we have $\bE[|X^m(t')-X^m(t)|^p] \leq C(t'-t)^{\gamma p}$ for all $(t,t')\in \Delta_T$. By the same argumentation as in the previous proof using Fatou's Lemma the same holds true for $X$. Then by the Kolmogorov continuity theorem (also known as Kolmogorov-Tschenzow, see \cite[Theorem~2.2.8]{Karatzas1991}) there is a H{\"o}lder continuous modification of order $h$ for all $h \in (0,\frac{\gamma p-1}{p})$.
\end{proof}

\appendix
\section{Convergence result for Volterra-type integrals}\label{sec: appendix}

This appendix contains an auxiliary result regarding the convergence in probability of Volterra-type integrals.

\begin{lemma}\label{lemma:convergence_integrals}
  Let $f \colon [0,T] \times \bR^d \times \mathcal{P}_\eta(\bR^d) \to \bR^e$ be a function such that for all compact sets $\mathcal K \subset \bR^d$ and $\bar{\mathcal K} \subset \mathcal{P}_\eta(\bR^d)$ and every $\epsilon > 0$ there exists a $\delta >0$ such that
  \begin{equation*}
    |f(t,x,\mu)-f(t,y,\nu)|\leq \epsilon,
  \end{equation*}
  for all $t \in [0,T]$, $x,y \in \mathcal K$ satisfying $|x-y| \leq \delta$, and $\mu, \nu \in \bar{\mathcal K}$ with $W_\eta(\mu,\nu) \leq \delta$, and such that $f$ fulfills the growth condition
  \begin{align*}
    |f(t,x,\mu)| \leq C(1+|x| + W_\eta(\delta_0,\mu))^2, \quad t \in [0,T], ~x \in \bR^d, ~\mu \in \mathcal{P}_\eta(\bR^d).
  \end{align*}
  Let $K \colon \Delta_T \to \bR$ be measurable and bounded function in $L^1([0,T])$ uniformly in the second variable, i.e. $\sup_{t \in [0,T]} \int_0^t |K(s,t)| \dd s \leq M$ for some $M \geq 0$. Let $(\pi_k = \{t_0^k=0, \ldots, t_{N(k)}^k=T\})_{k \in \bN}$ be a sequence of partitions with $\lim_{k \to \infty} |\pi_k|=0$ and define, for every $k \in \bN$, the function $\kappa_k \colon [0,T] \to [0,T]$ by
  \begin{equation*}
    \kappa_k(T) := T \text{ and } \kappa_k(t) := t_i^k \text{ if } t_i^k \leq t < t_{i+1}^k,\quad \text{for all } i=0,1,\ldots,N(k)-1.
  \end{equation*}
  If $(X^k)_{k \in \bN}$ is a sequence of continuous stochastic processes with uniformly finite $\eta$-th moments such that $X^k \to X$ in $C([0,T];\bR^d)$ as $k \to \infty$ $\bP$-a.s., $W_\eta(\L(X^k),\L(X))\to 0$ as $k \to \infty$ and
  \begin{align}\label{eq:Tight_Crit_1}
    \bE[|X^k(s)-X^k(t)|^\eta] \to 0
  \end{align}
  uniformly in $k$ as $|s-t|\to 0$, $s,t \in [0,T]$, then one has that
  \begin{equation*}
    \int_0^{\cdot} K(s,\cdot) f\big(s, X^k(\kappa_k(s)),\L(X^k(\kappa_k(s)))\big)\dd s \xrightarrow{\bP} \int_0^{\cdot} K(s,\cdot) f\big(s, X_s,\L(X_s)\big)\dd s
  \end{equation*}
  with respect to $\|\cdot\|_\infty$ as $k \to \infty$, where $\xrightarrow{\bP}$ denotes convergence in probability.
\end{lemma}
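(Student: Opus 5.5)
The plan is to split the difference into a pure discretization error and a limit error, and to control both uniformly in the outer time variable via the bound $\sup_{t}\int_0^t|K(s,t)|\dd s\le M$. Write $f^k(s):=f(s,X^k(\kappa_k(s)),\L(X^k(\kappa_k(s))))$ and $f^\infty(s):=f(s,X_s,\L(X_s))$. Then
\begin{equation*}
  \sup_{t\in[0,T]}\Big|\int_0^t K(s,t)\big(f^k(s)-f^\infty(s)\big)\dd s\Big|\le \sup_{t\in[0,T]}\int_0^t |K(s,t)|\,|f^k(s)-f^\infty(s)|\dd s .
\end{equation*}
It therefore suffices to show that this right-hand side tends to zero in probability. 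Since $K$ is assumed bounded, I will use $|K|\le \|K\|_\infty$ where convenient. The key point is that it is enough to control $|f^k(s)-f^\infty(s)|$ uniformly in $s$ on an event of large probability.

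\textbf{Step 1: paths.} Almost surely, $X^k\to X$ uniformly and $X$ is continuous. Hence $\sup_s|X^k(\kappa_k(s))-X_s|\le \|X^k-X\|_\infty+\sup_s|X_{\kappa_k(s)}-X_s|\to0$, using the uniform continuity of $X$ on $[0,T]$ and $|\pi_k|\to0$. Moreover, the paths eventually lie in a random compact ball. Given $\varepsilon'>0$, choose $R$ with $\bP(\|X\|_\infty>R-1)<\varepsilon'$. Then, for $k$ large, on an event of probability at least $1-2\varepsilon'$, all arguments $X^k(\kappa_k(s))$ and $X_s$ lie in $\mathcal K=\overline{B_R(0)}$, and they are uniformly $\delta$-close.

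\textbf{Step 2: laws.} This is deterministic. I will show $\sup_s W_\eta(\L(X^k(\kappa_k(s))),\L(X_s))\to0$ by splitting it as
\begin{equation*}
  W_\eta\big(\L(X^k_{\kappa_k(s)}),\L(X_{\kappa_k(s)})\big)+W_\eta\big(\L(X_{\kappa_k(s)}),\L(X_s)\big).
\end{equation*}
By Remark~\ref{remark_Wasserstein}, the first term is at most $W_\eta(\L(X^k),\L(X))\to0$. For the second term, $W_\eta(\L(X_u),\L(X_s))^\eta\le\bE[|X_u-X_s|^\eta]$. By Fatou's lemma and \eqref{eq:Tight_Crit_1}, the bound $\bE|X_u-X_s|^\eta\le\liminf_k\bE|X^k_u-X^k_s|^\eta$ tends to zero uniformly as $|u-s|\to0$.

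Next I need a compact $\bar{\mathcal K}\subset\mathcal P_\eta(\R^d)$ containing all these marginals. I will take the closure of $\{\L(X^k_u),\L(X_u):k,u\}$. This set is compact in $W_\eta$ because $t\mapsto \L(X_t)$ is $W_\eta$-continuous on $[0,T]$, so its image is compact, and $\L(X^k_u)$ is uniformly $W_\eta$-close to $\L(X_u)$. Precompactness then follows from total boundedness. The uniform continuity hypothesis on $f$ thus gives $\sup_s|f^k(s)-f^\infty(s)|\le\epsilon$ on the good event for $k$ large. Multiplying by $M$ concludes the proof.

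The main obstacle I expect is exactly this compactness issue: the modulus of continuity of $f$ is only available on compact subsets of $\mathcal P_\eta$. The hypothesis does not literally assume that $t\mapsto\L(X_t)$ is continuous, so this has to be derived from \eqref{eq:Tight_Crit_1} via Fatou as above. A second, minor issue is the endpoint convention $\kappa_k(T)=T$, which is irrelevant on a null set. The growth condition on $f$ is not needed for this argument. It would only be needed if one wished to upgrade the convergence to $L^1$, in which case uniform integrability from the moment bounds would be used.
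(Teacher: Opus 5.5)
Your proof is correct. It differs from the paper's in how the error is split and in how the continuity modulus of $f$ is made available.

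\textbf{How the paper splits the error.} It passes through the two intermediate points $(X^k(s),\L(X^k(\kappa_k(s))))$ and $(X^k(s),\L(X^k(s)))$. It then controls three pieces separately:
\begin{itemize}
\item $\sup_s|X^k(\kappa_k(s))-X^k(s)|$, uniformly in $k$, via tightness and the modulus-of-continuity criterion \cite[Theorem~2.4.10]{Karatzas1991};
\item the law-discretization error, by applying \eqref{eq:Tight_Crit_1} directly to $X^k$;
\item the remaining term, by a.s.\ convergence and Remark~\ref{remark_Wasserstein}.
\end{itemize}

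\textbf{How you split it.} You compare directly with the limit. The state error is then handled pathwise by $\|X^k-X\|_\infty$ plus the modulus of continuity of the single limit path $X$, so no tightness argument is needed. The time discretization on the law side is shifted onto $t\mapsto\L(X_t)$, and you derive its $W_\eta$-continuity via Fatou from \eqref{eq:Tight_Crit_1}. This continuity also follows by dominated convergence from $\bE\|X\|_\infty^\eta<\infty$, which is implicit in $\L(X)\in\mathcal P_\eta(C([0,T];\R^d))$. So your route does not really need \eqref{eq:Tight_Crit_1}.

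\textbf{What your route gains.} This continuity lets you actually construct the compact set $\bar{\mathcal K}\subset\mathcal P_\eta(\R^d)$ on which the hypothesis on $f$ is available. The paper only asserts ``by assumption'' that a compact set containing all $\L(X^k_t)$ exists, without giving the argument; uniformly bounded $\eta$-th moments alone would not suffice for $W_\eta$-compactness.

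\textbf{Two small points to tighten.}
\begin{itemize}
\item In your total-boundedness step, the finitely many $k$ for which $\sup_u W_\eta(\L(X^k_u),\L(X_u))$ is not yet small must be covered separately. This is immediate, since each $u\mapsto\L(X^k_u)$ is $W_\eta$-continuous and so has compact range. Passing from total boundedness to compactness of the closure also uses completeness of $(\mathcal P_\eta(\R^d),W_\eta)$, which you should mention.
\item ``Bounded'' in the hypothesis on $K$ only means $\sup_t\int_0^t|K(s,t)|\dd s\le M$; the kernels of interest in the paper may be singular. Your remark about $\|K\|_\infty$ should therefore be dropped. Since you only use $M$ at the end, nothing breaks.
\end{itemize}
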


\begin{proof}
  Let $\epsilon, \theta>0$ be arbitrary but fixed. Choose $N, K_1 \in \bN$ such that
  \begin{equation*}
    \bP\Big(\|X\|_\infty \geq \frac{N}{2}\Big) \leq \frac{\theta}{4}
    \quad\text{and}\quad
    \bP\Big(\|X^k-X\|_\infty \geq \frac{N}{2}\Big) \leq \frac{\theta}{4},
  \end{equation*}
  for all $k \geq K_1$. Then, we get
  \begin{align*}
    \bP(\|X\|_\infty \vee \|X^k\|_\infty \geq N) & \leq \bP(\{\|X\|_\infty \geq N \} \cup \{\|X^k-X\|_\infty + \|X\|_\infty \geq N\})\\
    & \leq \bP \Big(\| X \|_\infty \geq \frac{N}{2}\Big) + \bP \Big(\|X^k - X \|_\infty \geq \frac{N}{2}\Big)\\
    & \leq \frac{\theta}{2}.
  \end{align*}
  For every $k \in \bN$ and $t \in [0,T]$, we have
  \begin{align*}
    &|A^k_t - A_t| \\
    &\quad:= \Big|\int_0^t K(s,t) f\big(s,X^k(\kappa_k(s)),\L(X^k(\kappa_k(s)))\big)\dd s - \int_0^t K(s,t) f\big(s,X(s),\L(X(s))\big)\dd s \Big|\\
    & \quad\leq M \Big( \sup_{s \in [0,T]} |f\big(s,X^k(\kappa_k(s)), \L(X^k(\kappa_k(s)))\big) - f\big(s,X^k(s), \L(X^k(\kappa_k(s)))\big)|\\
    &\quad \qquad + \sup_{ s \in [0,T]} | f\big(s,X^k(s), \L(X^k(\kappa_k(s)))\big) - f\big(s,X^k(s), \L(X^k(s))\big)|\\
    &\quad \qquad + \sup_{ s \in [0,T]} |f\big(s,X^k(s), \L(X^k(s))\big) - f\big(s,X_s, \L(X_s)\big)|\Big).
  \end{align*}
  By assumption there is some compact $\bar{\mathcal K} \subset \mathcal P_\eta(\bR^d)$ such that $\mathcal L(X_t^k) \in \bar{\mathcal K}$ for all $k \in \bN$ and $t \in [0,T]$. Let $\delta >0$ be such that
  \begin{equation*}
    |f(t,x,\mu)-f(t,y,\nu)|\leq \frac{\epsilon}{3M},
  \end{equation*}
  for all $t \in [0,T]$, $x,y \in [-N,N]$ satisfying $|x-y| \leq \delta$ and $\mu, \nu \in \bar{\mathcal K}$ with $W_\eta(\mu,\nu) \leq \delta$.

  The convergence of the sequence $(X^k)_{k \in \bN}$ of stochastic processes implies tightness of their laws. By this tightness and by \cite[Theorem~2.4.10]{Karatzas1991}, there is a $K_2 \in \bN$ such that for all $ k \geq K_2$ we have
  \begin{equation*}
    \bP\Big(\sup_{s \in [0,T]} | X^k(\kappa_k(s))-X^k(s)| > \delta \Big) \leq \frac{\theta}{4}.
  \end{equation*}
  Note that on $\{\|X\|_\infty \vee \|X^k\|_\infty \leq N\} \cap \{\sup_{s \in [0,T]} |X^k(\kappa_k(s))-X^k(s)|\leq \delta \}$ we have
  \begin{equation*}
    \sup_{s \in [0,T]} \big|f(s,X^k(\kappa_k(s)), \L(X^k(\kappa_k(s)))) -f(s,X^k(s),\L(X^k(\kappa_k(s))))\big| \leq \frac{\epsilon}{3M}.
  \end{equation*}

  By \eqref{eq:Tight_Crit_1}, by $|\pi_k| \to 0$ as $k\to \infty$ and by
  \begin{equation*}
    W_\eta(\L(X^k(\kappa_k(s))),\L(X^k(s))) \leq \bE[|X^k(\kappa_k(s))-X^k(s)|^\eta]^{\frac{1}{\eta}},
  \end{equation*}
  we can find a $K_3 \in \bN$ such that, for all $k \geq K_3$, we have $W_\eta(\L(X^k(\kappa_k(s))),\L(X^k(s)))\leq \delta$ for all $s \in [0,T]$. Hence, we obtain
  \begin{equation*}
    \sup_{s \in [0,T]} \big|f(s,X^k(s),\L(X^k(\kappa_k(s))))-f(s,X^k(s),\L(X^k(s)))\big| \leq \frac{\epsilon}{3M}
  \end{equation*}
  for all $k \geq K_3$ on $\{\|X\|_\infty \vee \|X^k\|_\infty \leq N\}$.

  By $\lim_{k \to \infty} W_\eta(\L(X^k),\L(X))=0$ together with Remark~\ref{remark_Wasserstein}, there is a $K_4 \in \bN$ such that we have the bound
  \begin{equation*}
    \sup_{t \in [0,T]} W_\eta(\L(X^k(t)),\L(X_t))\leq \delta
  \end{equation*}
  for all $k \geq K_4$. By the $\bP$-a.s.-convergence of $(X^k)_{k \in \bN}$ to $X$ there is a $K_5 \in \bN$, such that $\bP(\|X^k-X\|_\infty > \delta) \leq \frac{\theta}{4}$. Hence, for all $k \geq K_4$ on $\{\|X^k-X\|_\infty \leq \delta\} \cap \{\|X\|_\infty \vee \|X^k\|_\infty \leq N\}$ we have
  \begin{equation*}
    \sup_{ s \in [0,T]} \big|f(s,X^k(s), \L(X^k(s))) - f(s,X_s, \L(X_s))\big| \leq \frac{\epsilon}{3M}.
  \end{equation*}

  Setting $K := \max_i\{K_i\}$, we get
  \begin{align*}
    &\bP(\|A^k-A\|_\infty \geq \epsilon) \\
    &\quad \leq \bP(\{\|A^k-A\|_\infty \geq \epsilon\} \cap \{\|X\|_\infty \vee \|X^k\|_\infty \leq N\}) + \bP(\|X\|_\infty \vee \|X^k\|_\infty\geq N)\\
    &\quad \leq \bP(\{\|X^k - X \|_\infty > \delta \} \cap \{ \|X\|_\infty \vee \|X^k\|_\infty \leq N \} )\\
    & \quad + \bP(\{\sup_{s \in [0,T]} |X^k(\kappa_k(s))-X^k(s)| > \delta \} \cap \{ \|X\|_\infty \vee \|X^k\|_\infty \leq N\})\\
    &\quad \quad + \bP(\|X\|_\infty \vee \|X^k\|_\infty \geq N)\\
    &\quad \leq \frac{\theta}{4} + \frac{\theta}{4} + \frac{\theta}{2}=\theta
  \end{align*}
  for all $k \geq K$.
\end{proof}

\bibliography{literature}{}
\bibliographystyle{amsalpha}

\end{document}